\newtheorem{Lemma}{Lemma}[section]
\newtheorem{remark}[Lemma]{Remark}
\newtheorem{theorem}[Lemma]{Theorem}
\newtheorem{lemma}[Lemma]{Lemma}
\newtheorem{proposition}[Lemma]{Proposition}
\newtheorem{corollary}[Lemma]{Corollary}
\newtheorem{example}[Lemma]{Example}
\newenvironment{Proof}{{\sc Proof.}\ }{~$\square$\vspace{0.2truecm}}
\newcommand{\Cal}[1]{{\mathcal #1}}
\newcommand{\End}{\operatorname{End}}
\newcommand{\Tor}{\operatorname{Tor}}
\newcommand{\Hom}{\operatorname{Hom}}
\newcommand{\Gen}{\operatorname{Gen}}
\newcommand{\Ext}{\operatorname{Ext}}
\newcommand{\Mod}{\operatorname{Mod-\!}}
\newcommand{\lMod}{\mbox{\rm -Mod}}
\DeclareMathOperator{\wed}{w.d.}
\DeclareMathOperator{\pd}{p.d.}
\newcommand{\cmat}{\left(\begin{array}}
\newcommand{\fmat}{\end{array}\right)}
\DeclareMathOperator{\reg}{reg}
\DeclareMathOperator{\Fdim}{F.dim}
\begin{document}
   \title{Equivalence of Some Homological Conditions for Ring Epimorphisms}
  \author[Alberto Facchini]{Alberto Facchini}
\address{Dipartimento di Matematica, Universit\`a di Padova, 35121 Padova, Italy}
 \email{facchini@math.unipd.it}
\thanks{The first author was partially supported by Dipartimento di Matematica ``Tullio Levi-Civita'' of Universit\`a di Padova (Project BIRD163492/16 ``Categorical homological methods in the study of algebraic structures'' and Research program DOR1690814 ``Anelli e categorie di moduli''). The second author was supported by a grant from  IPM}
 \author[Zahra Nazemian]{Zahra Nazemian}
\address{School of Mathematics, Institute for Research in Fundamental Sciences (IPM), Tehran, Iran}
 \email{z\_nazemian@yahoo.com}
\keywords{Cotorsion pair, Flat epimorphism of rings, Strongly flat module. \\ \protect \indent 2010 {\it Mathematics Subject Classification.} Primary 13E30.
Secondary 16S85.} 

      \begin{abstract} Let $R$ be a right and left Ore ring, $S$ its set of regular elements and    $Q = R[S^{-1}] = [S^{-1}] R$ the classical ring of quotients of $R$.  We prove that if $\Fdim(Q_Q) = 0$, then 
the following conditions are equivalent:
 $(i)$~Flat right
$R$-modules are strongly flat.
 $ (ii)$ Matlis-cotorsion right $R$-modules are Enochs-cotorsion.
$(iii) $ $h$-divisible right $R$-modules are weak-injective.
 $(iv)$~Homomorphic images of weak-injective right $R$-modules are weak-injective.
  $(v)$ Homomorphic images of injective right $R$-modules are weak-injective.
$(vi)$~Right $R$-modules of weak dimension $ \le 1$ are of projective dimension $\le1$.
 $(vii)$~The cotorsion pairs $(\Cal P_1,\Cal D)$ and $(\Cal F_1,\Cal W\Cal I)$ coincide.
  $(viii)$ Divisible right $R$-modules are weak-injective. This extends a result by Fuchs and Salce (2017) for modules over a commutative ring $R$.
\end{abstract}

    \maketitle

\section{Introduction}

In  \cite{FS}, Fuchs and Salce proved the equivalence of nine conditions for modules over commutative rings $R$ with perfect ring of quotients $Q$. The aim of this paper is to show that the equivalence of seven of their conditions also holds for noncommutative right and left Ore rings $R$ for which  $\Fdim(Q_Q) = 0$. Here
  $Q = R[S^{-1}] = [S^{-1}] R$, where $S$ is the set of regular elements of $R$. Notice that a commutative ring $Q$ is perfect if and only if $\Fdim(Q_Q) = 0$ \cite[pp.~466--468]{Bass}. Thus this paper is a genuine extension of part of the results by Fuchs and Salce.

The history of this line of research begins in the theory of abelian groups. The term {\em cotorsion} first appears in Harrison \cite{36}, who defined  cotorsion abelian groups as those reduced groups $G$ for which every short exact sequence $0\to G\to A\to B\to0$ with $B$ torsion-free splits, that is, the reduced abelian groups $G$ for which $\Ext(B,G)=0$ for every torsion-free abelian group $B$. The theory of cotorsion abelian groups was then extended to modules over commutative rings $R$ by Matlis \cite{MatlisMAMS, MatlisTF, 22}. As Matlis says in \cite[Introduction, p.~3]{MatlisTF}: ``Without doubt
there are no ideas in the general theory of integral domains which are
more fundamental in nature than those of cotorsion modules and completions as well as the relations between them.''  Matlis' results were soon extended to the case of noncommutative rings $R$, for instance in the works by Sandomierski \cite{sando}, who obtained very elegant results in the case of rings $R$ with a semisimple maximal quotient ring $Q$. Cotorsion theory then received a great impulse with the proof of the so called "flat cover conjecture": every module has a flat cover. This had been conjectured by Enochs \cite{7}, and proved with two different solutions by Bican, El Bashir and Enochs \cite{4}. One proof is an application of a theorem of Eklof and Trlifaj \cite{6} that guarantees the existence of ``enough projectives and injectives'' for suitable cotorsion theories.

\medskip

Fuchs and Salce \cite[Theorem~7.1]{FS} proved that if $R$ is an order in a commutative perfect ring $Q$, then the following conditions are
equivalent:

$(i)$ $R$ is an almost perfect ring.

$(ii)$ 
Flat $R$-modules are strongly flat.

$(iii)$ Matlis-cotorsion $R$-modules are Enochs-cotorsion.

$(iv)$ $R$-modules of w.d.$\le 1$ are of p.d.$\le 1$.

$(v)$ The cotorsion pairs $(\Cal P_1,\Cal D)$ and ($\Cal F_1, \Cal W\Cal I)$ are equal.

$(vi)$ Divisible $R$-modules are weak-injective.

$(vii)$ $h$-divisible $R$-modules are weak-injective.

$(viii)$ Homomorphic images of weak-injective $R$-modules are weak-injective.

$(ix)$ $R$ is $h$-local and $Q/R$ is semi-artinian.

\medskip

Here $\Cal P_1$ and $\Cal F_1$ denote the classes of all $R$-modules of projective dimension $\le1$, of weak dimension $\le 1$ respectively.

\medskip

We prove that if $R$ is a right and left Ore ring and  $\Fdim(Q_Q) = 0$, where
  $Q = R[S^{-1}] = [S^{-1}] R$, then 
the following conditions are equivalent:
 
$(i)$ Flat right
$R$-modules are strongly flat.
 
$ (ii)$ Matlis-cotorsion right $R$-modules are Enochs-cotorsion.

$(iii) $ $h$-divisible right $R$-modules are weak-injective.
 
$(iv)$ Homomorphic images of weak-injective right $R$-modules are weak-injective.
  
$(v)$ Homomorphic images of injective right $R$-modules are weak-injective.

$(vi)$ Right $R$-modules of $\wed \le 1$ are of $\pd\le1$.
 
$(vii)$ The cotorsion pairs $(\Cal P_1,\Cal D)$ and $(\Cal F_1,\Cal W\Cal I)$ coincide.
 
 $(viii)$ Divisible right $R$-modules are weak-injective.

\medskip

Our results in this paper are organized in sections with an increasing number of hypotheses on the extension of rings $R\subseteq Q$. In Section~\ref{0}, we assume that the inclusion $R\to Q$ be an epimorphism in the category of rings and that $\Tor_1^R(Q,Q)=0$. In Section~\ref{1}, the assumption $\Tor_1^R(Q,Q)=0$ is replaced by the stronger assumption that ${}_RQ$ be a flat left $R$-module. In Section~\ref{2}, we consider the case where the Gabriel topology $\Cal F$ consisting of all right ideals $I$ of $R$ with $IQ=Q$ has a basis of principal right ideals. In Section~\ref{4}, we consider the case of rings $R$ for which the set $S$ of all their regular elements is both a right denominator set and a left denominator set (right and left Ore ring), and we assume that $Q=R[S^{-1}]=[S^{-1}]R$ (the classical right and left ring of  quotients of $R$) and that $\Fdim(Q_Q)=0$. Under these hypotheses, we prove the equivalence of seven of the nine conditions considered by Fuchs and Salce in \cite{FS}.

\smallskip

As far as notation and terminology are concerned, for a ring $Q$, $\Fdim(Q_Q) = 0$ means that every right $R$-module has projective dimension $0$ or $\infty$. For a ring $Q$, $\Fdim(Q_Q) = 0$ if and only if $R$ is right perfect and every simple right $R$-module is a homomorphic image
of an injective module \cite[Theorem~6.3]{Bass}. For a commutative ring $Q$, $\Fdim(Q) = 0$ if and only if $R$ is perfect. A right and left Ore ring is a ring $R$ such that, for all elements $x,y\in R$ with $x$ regular, there exist elements $u,v,u,v'$ with $v$ and $v'$ regular, $ux=vy$ and $xu'=yv'$. If $S$ denotes the set of all regular elements of $R$, the condition ``$R$ is a right and left Ore ring'' is equivalent to the existence of both classical rings of quotients $R[S^{-1}]$ and $[S^{-1}]R$. In this case, they necessarily coincide.

  \section{Bimorphisms $R\to Q$ and the condition $\Tor_1^R(Q,Q)=0$}\label{0}

{\em In this section, $R$ and $Q$ are rings, $\varphi \colon R\to Q$ is a bimorphism in the category of rings, that is, $\varphi$ is both a monomorphism and an epimorphism, and $\Tor_1^R(Q,Q)=0$.} Set $K:=Q/\varphi(R)$. Then the pair $(Q,\varphi)$ has the following properties:
\begin{itemize}
\item[(1)] The mapping $\varphi$ is injective, and is a ring morphism, so that $R$ can be viewed as a subring of $Q$ via $\varphi$. We will always identify via $\varphi$ the isomorphic rings $R$ and $\varphi(R)$, so that $\varphi$ will be always seen as an inclusion.
\item[(2)] $\varphi$ is an epimorphism in the category of associative rings, that is, if, for every pair of morphisms of rings $\psi,\omega\colon Q\to Q'$, $\psi\varphi=\omega\varphi$ implies $\psi=\omega$.
\item[(3)] $\Hom(M_R,N_R)=\Hom(M_Q,N_Q)$ for every pair of right $Q$-modules $M_Q,N_Q$ \cite[Proposition~XI.1.2(d)]{Stenstrom}.
\item[(4)] $\Tor_1^R(M_R,{}_RN)\cong \Tor_1^Q(M_Q,{}_QN)$ for every right $Q$-module $M_Q$ and every left $Q$-module ${}_QN$ \cite[Theorem 4.8]{Schofield}.
\item[(5)]\label{(5)}  $\Ext^1_R(M_R,N_R)\cong\Ext^1_Q(M_Q,N_Q)$ for every pair of right $Q$-modules $M_Q,N_Q$ and $\Ext^1_R(_RM,{}_RN)\cong\Ext^1_Q(_QM,{}_QN)$ for every pair of left $Q$-modules $_QM,{} _QN$ \cite[Theorem 4.8]{Schofield}.
\item[(6)] The class of all right $R$-modules $M_R$ with $M\otimes_RQ=0$ is closed under homomorphic images, direct sums and extensions, and therefore it is the torsion class of a torsion theory for $\Mod R$. We will denote by $t(M_R)$ the torsion submodule of any right $R$-module $M_R$ in this torsion theory. In all the paper, whenever we say ``torsion'' or ``torsion-free'', we refer to this torsion theory.
\item[(7)]\label{(7)} A right $R$-module $M_R$ is a right $Q$-module $M_Q$ if and only if $$\Ext^1
_R(K_R,M_R) = 0\quad\mbox{\rm and}\quad \Hom(K_R,M_R) = 0$$ (\cite[Theorem 2.6]{AS} and \cite[Proposition 4.12]{18}).
As a consequence of this, if a right $R$-module $M_R$ is a right $Q$-module $M_Q$, then its unique right $Q$-module structure is given by the canonical isomorphism $\Hom(Q_R,M_R) \to M_R$ \cite[Remark~2.7]{AS}.
\item[(8)] The $R$-$R$-bimodule $Q\otimes_R Q$ is isomorphic to the $R$-$R$-bimodule $Q$ via the canonical isomorphism induced by the multiplication $\cdot\colon Q\times Q\to Q$ of the ring $Q$ \cite[Proposition~XI.1.2]{Stenstrom}.
\item[(9)] Every right $Q$-module is a torsion-free $R$-module.

[Proof of (9): Let $M_Q$ be a right $Q$-module. In order to prove that $M_R$ is torsion-free, we must prove that for every right $R$-module $T_R$, $T\otimes_RQ=0$ implies $\Hom(T_R, M_R)=0$. Since $M_R$ is a $Q$-module,  $\Hom(Q_R,M_R)\cong M_R$, as we have remarked in (7). Thus $$\Hom(T_R, M_R)\cong \Hom(T_R, \Hom(Q_R,M_R))\cong \Hom(T\otimes_RQ,M_R)=0.]$$
\item[(10)] $K\otimes_RQ=0$ and $\Tor_1^R(K_R, {}_RQ)$=0. 

[Proof of (10): Consider the short exact sequence of $R$-$R$-bimodules \begin{equation*} 0\rightarrow
R\rightarrow Q\rightarrow K\rightarrow 0\end{equation*} and tensor it with the left module $_RQ$, getting the short exact sequence of abelian groups  $0\rightarrow \Tor_1^R(K_R, {}_RQ)\rightarrow Q\rightarrow Q\otimes_R Q\rightarrow K\otimes_R Q\rightarrow 0$. The central mapping $Q_R\rightarrow Q\otimes_R Q$, $q\mapsto 1\otimes q$, is clearly a right inverse of the isomorphism $Q\otimes_R Q\to Q$ considered in (8). Since any bijection has a unique right inverse, which is also the left inverse and is a bijection, it follows that the mapping $Q_R\rightarrow Q\otimes_R Q$ is a bijection. Hence its kernel $\Tor_1^R(K_R, {}_RQ)$ and cokernel $Q\otimes_R K$ are both zero.]
\end{itemize}

\begin{remark}\label{VHOU}{\rm (a) Notice that our conditions for this section on the extension $R\subseteq Q$ are left/right symmetric. Therefore all the definions we  give and all the results we  prove in this section about right modules are always true, mutatis mutandis, for left $R$-modules as well.

(b) The ring epimorphisms $\varphi \colon R\to Q$ such that $\Tor^R_n(Q,Q)=0$ for all $n\ge 1$ are sometimes called {\em
homological ring epimorphisms} (\cite[Definition~2.3]{AS}, \cite{18}).}\end{remark}

\begin{Lemma}\label{ppp} The following conditions are equivalent for a right $R$-module $N_R$:

{\rm (1)} Every homomorphism $R_R\to N_R$ extends to a right $R$-module morphism $Q\to N_R$.

{\rm (2)} $N_R$ is a homomorphic image of a right $Q$-module.

{\rm (3)} $N_R$ is a homomorphic image of a direct sum of copies of $Q$.\end{Lemma}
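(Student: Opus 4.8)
The plan is to establish the cycle of implications $(1)\Rightarrow(3)\Rightarrow(2)\Rightarrow(1)$. It is worth noting in advance that this particular lemma uses only that $\varphi\colon R\to Q$ is a ring homomorphism; neither the epimorphism hypothesis nor $\Tor_1^R(Q,Q)=0$ enters here, so the argument is essentially formal.

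For $(1)\Rightarrow(3)$ I would start from a free presentation of $N_R$, that is, a set $X$ together with an epimorphism $p\colon R^{(X)}\to N_R$. Writing $\iota_x\colon R_R\to R^{(X)}$ for the $x$-th coordinate embedding, apply hypothesis (1) to each composite $p\iota_x\colon R_R\to N_R$ to obtain a right $R$-module morphism $g_x\colon Q_R\to N_R$ with $g_x|_R=p\iota_x$. These assemble into a morphism $g=(g_x)_{x\in X}\colon Q^{(X)}\to N_R$, and since $\im g\supseteq\sum_{x\in X}\im(g_x|_R)=\sum_{x\in X}\im(p\iota_x)=\im p=N_R$, the map $g$ is surjective, which is exactly (3). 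The implication $(3)\Rightarrow(2)$ is then immediate, because $Q^{(X)}$ is a right $Q$-module.

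For $(2)\Rightarrow(1)$, suppose $\pi\colon M_Q\to N_R$ is an epimorphism with $M_Q$ a right $Q$-module, and let $f\colon R_R\to N_R$ be arbitrary. Regarding $\pi$ as a surjection $M_R\to N_R$ of right $R$-modules and using that $R_R$ is projective, lift $f$ through $\pi$ to a morphism $h\colon R_R\to M_R$ with $\pi h=f$, and set $m:=h(1)\in M$. Using the right $Q$-module structure of $M$, define $\widetilde h\colon Q_R\to M_R$ by $\widetilde h(q)=mq$; this is a morphism of right $R$-modules since $R$ acts on $M$ through $\varphi$, and $\widetilde h(r)=mr=h(1)r=h(r)$ for all $r\in R$, so $\widetilde h$ extends $h$. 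Consequently $\pi\widetilde h\colon Q_R\to N_R$ is a right $R$-module morphism with $(\pi\widetilde h)|_R=\pi h=f$, which is (1).

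I do not anticipate a genuine obstacle; the content is organizational. The only points that need a routine verification are that right multiplication by $m\in M$ really defines an $R$-linear map $Q_R\to M_R$ extending $h$ (a direct check using the associativity of the $Q$-action and that $R$ embeds in $Q$), and that the coproduct map $g$ in $(1)\Rightarrow(3)$ is onto (its image already contains the image of the free cover $p$).
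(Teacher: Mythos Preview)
Your proof is correct and follows essentially the same route as the paper: the cycle $(1)\Rightarrow(3)\Rightarrow(2)\Rightarrow(1)$, with the same argument for $(1)\Rightarrow(3)$ via extending each coordinate of a free presentation. Your $(2)\Rightarrow(1)$ is a slight streamlining of the paper's version---the paper first passes to a free $Q$-module $Q^{(X)}$ and writes the lift in coordinates, whereas you invoke projectivity of $R_R$ directly against an arbitrary $Q$-module $M$---but the underlying idea (choose a preimage of $f(1)$ and extend by the $Q$-action) is identical.
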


\begin{Proof} (1)${}\Rightarrow{}$(3) The module $N_R$ is a homomorphic image of a direct sum of copies of $R_R$, so that there is an epimorphism $\pi\colon{}R^{(X)}_R\to N_R$. Each restriction $\pi_x\colon R_R\to N_R$ extends to a right $R$-module morphism $Q_R\to N_R$ by (1).

(3)${}\Rightarrow{}$(2) is trivial.

(2)${}\Rightarrow{}$(1) If $N_R$ is a homomorphic image of a right $Q$-module, $N_R$ is a homomorphic image of a free right $Q$-module, $N_R\cong Q_R^{(X)}/S$ say. Every homomorphism $R_R\to Q_R^{(X)}/S$ is left multiplication by an element $(q_x)_{x\in X}+S$ of $Q_R^{(X)}/S$. Left multiplication by the element $(q_x)_{x\in X}$ is a right $Q$-module morphism $Q_Q\to Q^{(X)}_R$, which composed with the canonical projection $Q^{(X)}_R\to {}Q^{(X)}/S\cong N_R$ yields the suitable extension $Q\to N_R$.
\end{Proof}

We say that a right $R$-module is {\em $h$-divisible} if it satisfies the equivalent conditions of Lemma~\ref{ppp}. Clearly, any direct sum of $h$-divisible right $R$-modules is $h$-divisible, homomorphic images of injective modules are $h$-divisible, and any right $R$-module $B_R$ contains a unique largest $h$-divisible submodule $h(B_R)$ that contains every $h$-divisible submodule of $B_R$. We will say that $B_R$ is {\em $h$-reduced} if $h(B_R)=0$ (equivalently, if $B_R$ has no nonzero $h$-divisible submodule, equivalently if $\Hom(Q_R,B_R)=0$).
 
 \begin{Lemma} {\rm \cite[Lemma 3.3]{AS}} For every right $R$-module $M_R$, the image of the canonical morphism $\Hom(Q_R,M_R)\to M$ is the unique largest $h$-divisible submodule $h(M_R)$ of $M_R$.\end{Lemma}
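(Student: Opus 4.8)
The plan is to set $N_R:=\operatorname{im}(\varepsilon)$, where $\varepsilon\colon\Hom(Q_R,M_R)\to M_R$ is the canonical evaluation morphism $f\mapsto f(1)$, and then to prove two complementary facts: $(a)$ $N_R$ is itself $h$-divisible, and $(b)$ every $h$-divisible submodule of $M_R$ is contained in $N_R$. Granting the existence of the largest $h$-divisible submodule $h(M_R)$ (recorded just before the statement), $(a)$ and $(b)$ together force $N_R=h(M_R)$.

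For $(a)$ I would check condition $(1)$ of Lemma~\ref{ppp} for $N_R$. The crucial preliminary observation is that for \emph{every} $f\in\Hom(Q_R,M_R)$ one has $\operatorname{im}(f)\subseteq N_R$: indeed, for $q\in Q$ the map $x\mapsto f(qx)$ is again a right $R$-module morphism $Q_R\to M_R$ (this uses only associativity in $Q$ and the $R$-linearity of $f$), and its value at $1$ is $f(q)$, so $f(q)\in\operatorname{im}(\varepsilon)=N$. Consequently each $f$ corestricts to $\bar f\colon Q_R\to N_R$ whose restriction to $R_R$ is the unique morphism sending $1\mapsto f(1)$. Now take any morphism $g\colon R_R\to N_R$, put $n:=g(1)\in N$, and pick $f\in\Hom(Q_R,M_R)$ with $f(1)=n$; then the corestriction $\bar f\colon Q_R\to N_R$ extends $g$. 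By Lemma~\ref{ppp}, $N_R$ is $h$-divisible. (Alternatively, $N_R$ is the image of the right $Q$-module $\Hom(Q_R,M_R)\otimes_RQ$ under $f\otimes q\mapsto f(q)$; but one should resist the temptation to instead pass to the quotient $\Hom(Q_R,M_R)/\ker\varepsilon$ as a $Q$-module, since $\ker\varepsilon$ need not be a $Q$-submodule.)

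For $(b)$, let $D_R\subseteq M_R$ be $h$-divisible and $d\in D$. The morphism $R_R\to D_R$, $r\mapsto dr$, extends along the inclusion $R\hookrightarrow Q$ to some $\sigma\colon Q_R\to D_R$ by Lemma~\ref{ppp}; composing with $D_R\hookrightarrow M_R$ produces $h\in\Hom(Q_R,M_R)$ with $h(1)=d$, whence $d\in\operatorname{im}(\varepsilon)=N$. Thus $D_R\subseteq N_R$, and combining with $(a)$ we conclude $N_R=h(M_R)$.

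The only genuine obstacle is the observation in $(a)$ that $\operatorname{im}(f)\subseteq N$ for every $f$: this is exactly what allows the candidate extension $\bar f$ to land inside $N_R$ rather than merely inside $M_R$, and it is the one place where keeping the left and right $R$- and $Q$-actions straight — in particular in the formula $x\mapsto f(qx)$ — really matters. Everything else is formal.
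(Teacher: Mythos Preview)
Your proof is correct. The paper does not supply its own argument for this lemma; it merely records the statement with a citation to \cite[Lemma~3.3]{AS}, so there is nothing to compare against directly.

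One small remark on economy: your part~(a) can be shortened. As you implicitly note in your parenthetical, $\Hom(Q_R,M_R)$ is already a right $Q$-module via $(f\cdot q)(x)=f(qx)$, and $\varepsilon$ is a right $R$-module map; hence $N_R=\operatorname{im}(\varepsilon)$ is a homomorphic image of a right $Q$-module, which is exactly condition~(2) of Lemma~\ref{ppp}. This bypasses the need to verify condition~(1) by hand. Your verification of condition~(1) is nonetheless correct, and the key observation that $\operatorname{im}(f)\subseteq N$ for every $f$ (via the auxiliary maps $x\mapsto f(qx)$) is precisely what underlies the right $Q$-module structure just mentioned. Part~(b) is the standard and unavoidable half of the argument.
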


\begin{proposition} \label{factor} {\rm \cite[Lemma 3.4]{AS}}. The following conditions are equivalent:

{\rm (1)}  $h(M_R/h(M_R))=0$ for every right $R$-module $M_R$.

{\rm (2)} For every short exact sequence $ 0  \to A_R \to B_R \to C_R \to 0$ of right 
$R$-modules, if $A_R$ and $C_R$ are $h$-divisible, then $B_R$ is also $h$-divisible.\end{proposition}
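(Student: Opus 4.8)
The plan is to establish both implications purely formally, using only the closure properties of the class of $h$-divisible modules together with the fact that $h(B_R)$ is the largest $h$-divisible submodule of $B_R$; in particular I will not need the description of $h(M_R)$ as the image of $\Hom(Q_R,M_R)\to M_R$. Three facts will do all the work: (a) a homomorphic image of an $h$-divisible module is $h$-divisible (immediate from condition (3) of Lemma~\ref{ppp}, since a composite of epimorphisms starting from a direct sum of copies of $Q$ is again such an epimorphism); (b) for every right $R$-module $B_R$, the submodule $h(B_R)$ is $h$-divisible and contains every $h$-divisible submodule of $B_R$; and, as a consequence of (b), (c) a right $R$-module $D_R$ is $h$-divisible if and only if $h(D_R)=D_R$.

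For (2)$\Rightarrow$(1), I would fix a right $R$-module $M_R$ and let $N_R$ be the unique submodule of $M_R$ containing $h(M_R)$ with $N_R/h(M_R)=h(M_R/h(M_R))$. The short exact sequence $0\to h(M_R)\to N_R\to N_R/h(M_R)\to 0$ has both outer terms $h$-divisible by (b), so (2) forces $N_R$ to be $h$-divisible; maximality of $h(M_R)$ then gives $N_R\subseteq h(M_R)$, hence $N_R=h(M_R)$ and $h(M_R/h(M_R))=0$.

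For (1)$\Rightarrow$(2), I would start from a short exact sequence $0\to A_R\to B_R\to C_R\to 0$ with $A_R$ and $C_R$ $h$-divisible. Since $A_R$ is an $h$-divisible submodule of $B_R$, fact (b) gives $A_R\subseteq h(B_R)$, so $B_R/h(B_R)$ is a homomorphic image of $B_R/A_R\cong C_R$ and is therefore $h$-divisible by (a). Applying (1) to $B_R$ yields $h(B_R/h(B_R))=0$, while fact (c) applied to the $h$-divisible module $B_R/h(B_R)$ gives $h(B_R/h(B_R))=B_R/h(B_R)$. Comparing, $B_R/h(B_R)=0$, that is, $B_R=h(B_R)$ is $h$-divisible.

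I do not anticipate any real obstacle: the entire argument is bookkeeping with the submodule $h(B_R)$, and the only step that is not completely trivial is fact (a), which is nonetheless a one-line consequence of the characterization of $h$-divisibility via quotients of direct sums of copies of $Q$. If one preferred, the implication (1)$\Rightarrow$(2) could instead be routed through the Lemma preceding the proposition (via $\Hom(Q_R,-)$), but the submodule-theoretic argument above seems the shortest.
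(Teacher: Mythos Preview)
Your argument is correct. Both implications are handled cleanly: for (2)$\Rightarrow$(1) you pull back $h(M_R/h(M_R))$ to a submodule $N_R$ of $M_R$ and use (2) together with the maximality of $h(M_R)$; for (1)$\Rightarrow$(2) you use that the image of $A_R$ in $B_R$ is $h$-divisible, hence contained in $h(B_R)$, so $B_R/h(B_R)$ is a quotient of $C_R$ and therefore $h$-divisible, which combined with (1) forces $B_R/h(B_R)=0$. The three facts (a), (b), (c) you isolate are exactly what is needed, and each is available in the paper immediately after Lemma~\ref{ppp}.

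As for comparison with the paper: the paper does not supply its own proof of this proposition; it simply records the statement with a citation to \cite[Lemma~3.4]{AS}. Your self-contained argument is the standard one and is precisely the kind of proof one would expect behind that citation.
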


Several of our results can be stated in the following terminology and notation, as in \cite[\S~8]{andersonfuller}. Let~$\mathcal{U}$ denote a class of right $R$-modules and $\Gen(\mathcal{U})$ the class of all the right modules $M$ {\em generated by} $\Cal U$, that is, 
for which there exist an  indexed set  $ (U_\alpha )_ {\alpha \in A}$
in   $\mathcal{U}$ and an epimorphism $ \oplus_{\alpha \in A} U_ {\alpha} \to  M$.
For any right module $M$, set Tr$_M (\mathcal{U}) := \sum \{\,f(U) \mid  f\colon U \to M$
 is a homomorphism for some $ U \in \mathcal{U}\, \}$.  Thus $M \in{} \Gen(\mathcal{U})$ if and only if Tr$_M (\mathcal{U}) = M$. If $\mathcal{U}$ consists of a unique module $U$, we will write $\Gen({U})$ and Tr$_M ({U})$.
Thus ``$h$-divisible'' means ``generated by $Q_R$'', and we have that Tr$_M (Q_R)=h(M)$ for every right $R$-module $M$.

\begin{Lemma}\label{pjb1}
Every right $R$-module generated by $K_R$ is torsion and $h$-divisible.\end{Lemma}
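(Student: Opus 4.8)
The plan is to reduce the statement to two facts about the single module $K_R$ — that it is torsion and that it is $h$-divisible — and then transport these along ``generation''. By definition, a right $R$-module $M$ generated by $K_R$ is an epimorphic image of a direct sum $K_R^{(A)}$ for some index set $A$. Since the class of torsion modules is closed under direct sums and homomorphic images (this is exactly part of property~(6)), and the class of $h$-divisible modules is likewise closed under direct sums (recorded right after Lemma~\ref{ppp}) and under homomorphic images, it suffices to establish the two assertions for $K_R$ itself.

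First I would check torsion: this is immediate from item~(10) of the list in this section, which gives $K\otimes_R Q=0$; by the definition of the torsion theory in~(6), $K_R$ lies in the torsion class. Next, $h$-divisibility of $K_R$: the module $K=Q/\varphi(R)$ is by construction a homomorphic image of $Q_R$, hence of a direct sum of one copy of $Q_R$, so it satisfies condition~(3) of Lemma~\ref{ppp} and is therefore $h$-divisible. Combining these with the closure properties above yields that every module generated by $K_R$ is both torsion and $h$-divisible.

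The one step deserving an explicit word is the closure of the class of $h$-divisible modules under homomorphic images, which is not stated verbatim in the excerpt: if $N_R$ is $h$-divisible then by Lemma~\ref{ppp}(3) there is an epimorphism $Q_R^{(X)}\twoheadrightarrow N_R$, and composing with a further epimorphism $N_R\twoheadrightarrow N'_R$ exhibits $N'_R$ as an epimorphic image of $Q_R^{(X)}$, so $N'_R$ is again $h$-divisible by Lemma~\ref{ppp}(3). I do not expect a genuine obstacle here: the lemma is essentially a bookkeeping consequence of properties~(6) and~(10) together with the characterization of $h$-divisibility in Lemma~\ref{ppp}.
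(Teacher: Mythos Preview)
Your proof is correct and follows essentially the same approach as the paper's own proof: show that $K_R$ itself is torsion (by~(10)) and $h$-divisible (as a quotient of $Q_R$), then invoke closure of both classes under direct sums and homomorphic images. Your extra verification that $h$-divisibility is preserved under quotients is a harmless elaboration of a fact the paper simply asserts.
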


\begin{Proof} The right $R$-module $K_R$ is clearly $h$-divisible, and is torsion by (10). Both the classes of $h$-divisible modules and torsion modules are closed under direct sums and homomorphic images. Thus every module generated $K_R$ is torsion and $h$-divisible.\end{Proof}

\begin{proposition} The right $R$-module $\Hom(K_R,M_R)$ is torsion-free for every right 
$R$-module $M_R$.\end{proposition}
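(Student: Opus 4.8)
The plan is to use the standard description of the torsion-free class, the same one already used in the proof of (9) above: a right $R$-module $N_R$ is torsion-free if and only if $\Hom(T_R,N_R)=0$ for every torsion right $R$-module $T_R$, i.e.\ for every $T_R$ with $T\otimes_RQ=0$. Taking $N_R=\Hom(K_R,M_R)$, which is a right $R$-module via the left $R$-action on the $R$-$R$-bimodule $K=Q/\varphi(R)$, it therefore suffices to show that $\Hom\bigl(T_R,\Hom(K_R,M_R)\bigr)=0$ whenever $T\otimes_RQ=0$.

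The key step is the tensor--Hom adjunction for the bimodule $K$, which furnishes a natural isomorphism
\[
\Hom\bigl(T_R,\Hom(K_R,M_R)\bigr)\;\cong\;\Hom\bigl((T\otimes_RK)_R,M_R\bigr),
\]
where $T\otimes_RK$ is formed using the right action of $R$ on $T$ and the left action of $R$ on $K$, and carries the right $R$-module structure coming from the remaining right action of $R$ on $K$. Concretely, the isomorphism sends $\phi\in\Hom(T_R,\Hom(K_R,M_R))$ to the map $t\otimes k\mapsto\phi(t)(k)$; that this is well defined on the tensor product uses that the right module structure on $\Hom(K_R,M_R)$ is $(g\cdot r)(k)=g(rk)$, and that it lands in $\Hom\bigl((T\otimes_RK)_R,M_R\bigr)$ uses that each $\phi(t)$ is a morphism of right $R$-modules.

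It remains to see that a torsion $T_R$ annihilates $K$ under tensoring. Applying $T\otimes_R{-}$ to the short exact sequence of $R$-$R$-bimodules $0\to R\to Q\to K\to0$ and using right exactness of the tensor product exhibits $T\otimes_RK$ as a homomorphic image of $T\otimes_RQ$; since $T$ is torsion, $T\otimes_RQ=0$, hence $T\otimes_RK=0$. Combining this with the displayed isomorphism gives $\Hom\bigl(T_R,\Hom(K_R,M_R)\bigr)\cong\Hom(0,M_R)=0$ for every torsion $T_R$, as required. The only point that needs care is the bookkeeping of the one-sided module structures in the adjunction (which copy of $R$ acting on $K$ feeds which functor); everything else is immediate, and in fact the argument uses neither $\Tor_1^R(Q,Q)=0$ nor that $\varphi$ is a bimorphism, only that $K$ is the cokernel of a bimodule map $R\to Q$.
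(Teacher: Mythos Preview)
Your proof is correct, and takes a slightly different route from the paper. The paper applies $\Hom(-,M_R)$ to the short exact sequence $0\to R\to Q\to K\to 0$ to embed $\Hom(K_R,M_R)$ as a submodule of $\Hom(Q_R,M_R)$, observes that the latter is a right $Q$-module and hence torsion-free by property~(9), and concludes since submodules of torsion-free modules are torsion-free. You instead go straight to the adjunction $\Hom\bigl(T_R,\Hom(K_R,M_R)\bigr)\cong\Hom\bigl((T\otimes_RK)_R,M_R\bigr)$ and kill $T\otimes_RK$ via right exactness of $T\otimes_R{-}$ applied to $R\to Q\to K\to 0$. Your argument is arguably cleaner in that it avoids the detour through property~(9) (whose proof in the paper invokes property~(7)); as you note, you need only that $K$ is the cokernel of a bimodule map $R\to Q$ and the definition of the torsion theory. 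The paper's approach, on the other hand, has the virtue of exhibiting $\Hom(K_R,M_R)$ concretely as a submodule of a $Q$-module, which is a structurally informative fact in its own right.
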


\begin{Proof} Apply the functor $\Hom(-,M_R)$ to the short exact sequence \begin{equation*} 0\rightarrow
R\rightarrow Q\rightarrow K\rightarrow 0\end{equation*} of $R$-$R$-bimodules, getting an exact sequence $$0\to\Hom(K_R,M_R)\to\Hom(Q_R,M_R)\to\Hom(R_R,M_R)$$ of 
right $R$-modules. Now $\Hom(Q_R,M_R)$ is a right $Q$-module, hence it is a torsion-free right $R$-module by (9). In any 
torsion theory, submodules of torsion-free modules are torsion-free. Thus the submodule 
$\Hom(K_R,M_R)$ of the torsion-free right $R$-module $\Hom(Q_R,M_R)$ is torsion-free.\end{Proof}

A right module $M_R$ is {\em Matlis-cotorsion} if $\Ext^1_R(Q_R,M_R)=0$.
By (5), all right $Q$-modules are 
Matlis-cotorsion right $R$-modules. More generally, let $ \mathcal {A}$ be a  class of right $R$-modules. Set $ ^ \bot \mathcal {A}:= \{ \,B\in \Mod R \mid \Ext ^1 (B, A) = 0$ for every $A \in \mathcal {A}\,\}$.
Similarly, $\mathcal {A}^ \bot := \{ \,B\in \Mod R \mid\Ext ^1 (A, B) = 0$ for every $A \in \mathcal {A}\,\}$. Note that
$\mathcal {A} \subseteq{} ^ \bot (\mathcal {A}^ \bot)$ and $\mathcal {A} \subseteq ( ^\bot \mathcal {A} ) ^ \bot$  always.

If the class of $\mathcal {A}$
consists of a single element, $A$ say, we will simply write $^ \bot A$ and $A ^ \bot$. Thus the class of Matlis-cotorsion modules is the class $Q^ \bot$.

\begin{theorem} \label{1.5} Let $ 0  \to A_R \to B_R \to C_R \to 0$ be a short exact sequence of right 
$R$-modules. Then:

{\rm (1)} If $A_R$ and $C_R$ are $h$-reduced, then $B_R$ is also $h$-reduced.

{\rm (2)} If $A_R$ and $C_R$ are Matlis-cotorsion, then $B_R$ is also Matlis-cotorsion.

{\rm (3)} If $B_R$ is Matlis-cotorsion $h$-reduced, then $A_R$ is Matlis-cotorsion if and only if $C_R$ is $h$-reduced.\end{theorem}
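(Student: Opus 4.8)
The plan is to obtain all three parts at once from the long exact sequence produced by applying the functor $\Hom(Q_R,-)$ to the given short exact sequence $0\to A_R\to B_R\to C_R\to 0$, namely
$$0\to\Hom(Q_R,A_R)\to\Hom(Q_R,B_R)\to\Hom(Q_R,C_R)\xrightarrow{\;\delta\;}\Ext^1_R(Q_R,A_R)\to\Ext^1_R(Q_R,B_R)\to\Ext^1_R(Q_R,C_R)\to\cdots,$$
keeping in mind the two characterizations recorded above: a right $R$-module $M_R$ is $h$-reduced exactly when $\Hom(Q_R,M_R)=0$, and it is Matlis-cotorsion exactly when $\Ext^1_R(Q_R,M_R)=0$.

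For (1), I would simply note that $\Hom(Q_R,B_R)$ lies in the exact sequence between $\Hom(Q_R,A_R)=0$ and $\Hom(Q_R,C_R)=0$, hence vanishes; that is, the class of $h$-reduced modules is closed under extensions. For (2), likewise $\Ext^1_R(Q_R,B_R)$ lies between $\Ext^1_R(Q_R,A_R)=0$ and $\Ext^1_R(Q_R,C_R)=0$, so it vanishes and the class of Matlis-cotorsion modules is closed under extensions.

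For (3), the two hypotheses on $B_R$ are used to collapse the middle of the sequence. From $\Hom(Q_R,B_R)=0$ (since $B_R$ is $h$-reduced) one gets $\Hom(Q_R,A_R)=0$, so $A_R$ is automatically $h$-reduced, and moreover the connecting map $\delta\colon\Hom(Q_R,C_R)\to\Ext^1_R(Q_R,A_R)$ becomes injective; from $\Ext^1_R(Q_R,B_R)=0$ (since $B_R$ is Matlis-cotorsion) the very same map $\delta$ becomes surjective. Hence $\delta$ is an isomorphism $\Hom(Q_R,C_R)\cong\Ext^1_R(Q_R,A_R)$, and therefore $\Ext^1_R(Q_R,A_R)=0$ if and only if $\Hom(Q_R,C_R)=0$; that is, $A_R$ is Matlis-cotorsion if and only if $C_R$ is $h$-reduced.

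I do not expect a real obstacle here: everything reduces to reading off the appropriate six terms of this one long exact sequence. The only point that requires attention is to invoke both characterizations of the hypotheses on $B_R$ at the same time, so that $\Hom(Q_R,B_R)$ and $\Ext^1_R(Q_R,B_R)$ are both set to zero in the same sequence; the remaining pieces then fall into place mechanically.
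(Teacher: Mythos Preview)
Your proof is correct and follows exactly the approach of the paper: apply $\Hom(Q_R,-)$ to the short exact sequence and read off the three statements from the resulting long exact sequence. You have simply spelled out the details that the paper leaves implicit.
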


\begin{Proof} Apply the functor $\Hom(Q_R,-)\colon\Mod R\to{}$Ab to the given short exact sequence, getting the corresponding long exact sequence.\end{Proof}

\begin{theorem} \label{1.6} The right $R$-module $\Hom({}_RK_R,M_R)$ is Matlis-cotorsion $h$-reduced for every right $R$-module $M_R$.\end{theorem}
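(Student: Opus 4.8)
The plan is to derive both assertions at once from the long exact sequence obtained by applying $\Hom_R(Q_R,-)$ to a short exact sequence extracted from the fundamental sequence $0\to R\to Q\to K\to 0$ of $R$-$R$-bimodules, in the spirit of the proof of Theorem~\ref{1.5}. First, apply the contravariant functor $\Hom_R(-,M_R)$ to that sequence. Since $\Ext^1_R(R_R,M_R)=0$ and, by \cite[Lemma~3.3]{AS} quoted above, the image of the canonical evaluation map $\Hom(Q_R,M_R)\to\Hom(R_R,M_R)=M$ is exactly $h(M_R)$, one obtains a short exact sequence of right $R$-modules
\begin{equation*}
0\longrightarrow\Hom({}_RK_R,M_R)\longrightarrow\Hom(Q_R,M_R)\longrightarrow h(M_R)\longrightarrow 0 .
\end{equation*}
Here $\Hom(Q_R,M_R)$ carries a natural right $Q$-module structure (from the left action of $Q$ on itself), so that $\Ext^1_R(Q_R,\Hom(Q_R,M_R))\cong\Ext^1_Q(Q_Q,\Hom(Q_R,M_R))=0$ by (5); that is, $\Hom(Q_R,M_R)$ is Matlis-cotorsion.

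Next, apply the covariant functor $\Hom_R(Q_R,-)$ to the short exact sequence above. Using $\Ext^1_R(Q_R,\Hom(Q_R,M_R))=0$ one gets the exact sequence
\begin{multline*}
0\to\Hom(Q_R,\Hom({}_RK_R,M_R))\to\Hom(Q_R,\Hom(Q_R,M_R))\xrightarrow{\,\beta\,}\\
\Hom(Q_R,h(M_R))\to\Ext^1_R(Q_R,\Hom({}_RK_R,M_R))\to 0 ,
\end{multline*}
where $\beta$ is induced by the epimorphism $\Hom(Q_R,M_R)\to h(M_R)$. Everything hinges on showing that $\beta$ is an isomorphism, and I would establish this by two observations. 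First, any homomorphism $Q_R\to M_R$ has $h$-divisible image, hence image contained in $h(M_R)$, so postcomposition with the inclusion $h(M_R)\hookrightarrow M_R$ is a bijection $\iota_*\colon\Hom(Q_R,h(M_R))\to\Hom(Q_R,M_R)$. Second, by (3) every $g\in\Hom(Q_R,\Hom(Q_R,M_R))$ is automatically $Q$-linear, so $g(q)=g(1)\,q$ for all $q\in Q$; hence $(\iota_*\circ\beta)(g)$ is the map $q\mapsto g(q)(1)=\bigl(g(1)\,q\bigr)(1)=g(1)(q)$, i.e.\ $\iota_*\circ\beta$ coincides with the canonical isomorphism $\Hom(Q_R,\Hom(Q_R,M_R))\to\Hom(Q_R,M_R)$, $g\mapsto g(1)$, furnished by the consequence of (7) applied to the right $Q$-module $\Hom(Q_R,M_R)$. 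As $\iota_*$ is bijective, $\beta$ is an isomorphism.

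Finally, with $\beta$ an isomorphism the displayed exact sequence gives $\Hom(Q_R,\Hom({}_RK_R,M_R))=\ker\beta=0$, so $\Hom({}_RK_R,M_R)$ is $h$-reduced, and $\Ext^1_R(Q_R,\Hom({}_RK_R,M_R))=\coker\beta=0$, so $\Hom({}_RK_R,M_R)$ is Matlis-cotorsion, as desired. I expect the only genuinely delicate point to be the identification of $\beta$ with the canonical isomorphism $\Hom(Q_R,\Hom(Q_R,M_R))\cong\Hom(Q_R,M_R)$; the rest is formal once one remembers that a homomorphism out of $Q_R$ automatically takes values in the $h$-divisible part and is automatically $Q$-linear.
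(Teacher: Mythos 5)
Your proof is correct, but it follows a genuinely different route from the paper's. The paper establishes the two properties separately: $h$-reducedness comes from the adjunction isomorphism $\Hom(Q_R,\Hom({}_RK_R,M_R))\cong\Hom(Q\otimes_RK,M_R)$ together with $Q\otimes_RK=0$ (property (10) plus the left/right symmetry of the hypothesis $\Tor_1^R(Q,Q)=0$); Matlis-cotorsion is then obtained by embedding $M_R$ into an injective module $E_R$, proving that $\Hom({}_RK_R,E_R)$ is Matlis-cotorsion via the Cartan--Eilenberg isomorphism $\Ext^1_R(Q_R,\Hom({}_RK_R,E_R))\cong\Hom_R(\Tor_1^R(Q_R,{}_RK),E_R)=0$, and transferring the conclusion back to $\Hom({}_RK_R,M_R)$ through Theorem~\ref{1.5}(3). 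You instead extract both conclusions at once as the kernel and cokernel of a single map $\beta$ in the long exact sequence of $\Hom(Q_R,-)$ applied to $0\to\Hom(K_R,M_R)\to\Hom(Q_R,M_R)\to h(M_R)\to 0$, and the delicate point --- identifying $\iota_*\circ\beta$ with the canonical evaluation isomorphism $g\mapsto g(1)$ of (7), using that $g$ is automatically $Q$-linear by (3) and that every morphism out of $Q_R$ lands in $h(M_R)$ --- does check out. Your argument is more self-contained (no injective hulls, no Cartan--Eilenberg duality formula, no appeal to the vanishing of $Q\otimes_RK$), at the cost of a careful diagram-level identification; the paper's argument modularizes the two claims and reuses Theorem~\ref{1.5}(3), a tool it needs elsewhere anyway.
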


\begin{Proof} We know that there is a canonical isomorphism $$\Hom(Q_R,\Hom({}_RK_R,M_R))\cong\Hom(Q\otimes_RK,M_R).$$ Since the hypothesis $\Tor_1^R(Q,Q)$ in this section is left/right symmetric, from (10) we know that $Q\otimes_RK=0$, so that $\Hom({}_RK_R,M_R)$ is $h$-reduced for every right $R$-module $M_R$. Now let $E_R$ be an injective right $R$-module containing $M_R$ and consider the exact sequence \begin{equation}0\to \Hom({}_RK_R,M_R)\to \Hom({}_RK_R,E_R)\to \Hom({}_RK_R,E/M).\label{VH}\end{equation} All the three right $R$-modules in this exact sequence are $h$-reduced by the first part of this proof. 

Now $\Tor^R_1(Q_R,{}_RK)=0$ by (10).
For the module $\Hom({}_RK_R,E_R)$, we have that $\Ext^1_R(Q_R,\Hom({}_RK_R,E_R))\cong \Hom_R(\Tor_1^R(Q_R,{}_RK),E_R)$ \cite[Proposition~VI.5.1]{CartanEilenberg}. It follows that $\Hom({}_RK_R,E_R)$ is Matlis-cotorsion. Now apply Theorem~\ref{1.5}(3) to the exact sequence (\ref{VH}), getting that $\Hom({}_RK_R,M_R)$ is Matlis-cotorsion as well.
\end{Proof}

A {\em cotorsion pair} is a pair $\mathfrak C=(\Cal A, \Cal B)$ of classes of right modules over
the ring $R$ such that $\Cal A ={}^ \bot \Cal B$ and $\Cal B = \Cal A^ \bot$. 
The class $\Cal A$ is always closed under arbitrary direct sums and contains
all projective right $R$-modules. Dually, the class $\Cal B$ is closed under direct products and contains all injective right $R$-modules. 

\section{Left flat bimorphisms}\label{1}

{\em In this section, $R$ and $Q$ are rings, $\varphi \colon R\to Q$ is a bimorphism in the category of rings, that is, $\varphi$ is both a monomorphism and an epimorphism, and ${}_RQ$ is a flat left $R$-module.} 
For examples of bimorphisms $R\subseteq Q$ with $\Tor^R_1(Q,Q)=0$ but $Q_R$ not flat, that is, bimorphisms satisfying the hypotheses of Section~\ref{0} but not those of Section~\ref{1}, see 
\cite[Examples 3.11(1) and 4.17]{AS} and Example~\ref{right, but not left}. Note that now, in this section, the conditions on the extension $R\subseteq Q$ are not left/right symmetric anymore, so that we must now distinguish between the behaviour of right modules and that of left modules (cf.~Remark~\ref{VHOU}).

The pair $(Q,\varphi)$ has the following properties:
\begin{itemize}
\item[(11)] The inclusion of $R$ into its maximal right ring of quotients $Q_{\max}(R)$ factors through the mapping $\varphi$ \cite[Theorem~XI.4.1]{Stenstrom}.
\item[(12)] $\varphi\colon R\to Q$ is the canonical homomorphism of $R$ into its right localization $R_{\Cal F}$, where $\Cal F=\{\,I\mid I$ is a right ideal of $R$ and $\varphi(I)Q=Q\,\}$ is a Gabriel topology consisting of dense right ideals. Moreover, $\Cal F$ has a basis consisting of finitely generated right ideals \cite[Theorem~XI.2.1 and Proposition~XI.3.4]{Stenstrom}.
\item[(13)] Every right $Q$-module is isomorphic to $M_{\Cal F}\cong M\otimes_R Q$ for some right $R$-module $M_R$ \cite[Proposition~XI.3.4]{Stenstrom}.
\item[(14)] The full subcategory of $\Mod R$ whose objects are all $\Cal F$-closed $R$-modules (that is, the modules $M_R$ such that, for every $I\in\Cal F$, every right $R$-module morphism $I\to M_R$ extends to a morphism $R_R\to M_R$ in a unique way) is equivalent to the category $\Mod Q$ \cite[Proposition~XI.3.4(a)]{Stenstrom}.
\item[(15)]\label{(15)} For every right $R$-module $M_R$, the kernel of the canonical right $R$-module morphism $M_R\to M\otimes_R Q$ is the torsion submodule $t(M_R)$ of $M_R$ \cite[Proposition~XI.3.4(f)]{Stenstrom}.
\item[(16)]\label{(16)} 
The torsion submodule $t(M_R)$ of any right $R$-module $M_R$ is isomorphic to $\Tor_1^R(M_R, {}_RK)$ \cite[Proposition~XI.1.2(e)]{Stenstrom}.

[Proof of (16): Consider the short exact sequence of $R$-$R$-bimodules $0\rightarrow
R\rightarrow Q\rightarrow K\rightarrow 0$ and tensor it with the right module $M_R$, getting  the short exact sequence of right $R$-modules  $0\rightarrow \Tor_1^R(M_R, {}_RK)\rightarrow M_R\rightarrow M_R\otimes_R Q\rightarrow M_R\otimes_R K\rightarrow 0$. Now (16) follows from (15).]

\item[(17)] Every right ideal of $Q$ is extended from a right ideal of $R$, that is, $I=\varphi^{-1}(I)Q$ for every right ideal $I$ of $Q$ \cite[Proposition~4(ii)]{Knight}.
\item[(18)] $\Ext^n_R(M_R,N_R)\cong\Ext^n_Q(M\otimes_R{}Q,N)$ for every $n\ge 0$, every right $R$-module $M_R$ and every right $Q$-module $N_Q$ (\cite[Page 232]{Stenstrom} or \cite[Page 118]{CartanEilenberg}).
\end{itemize}

It is possible to prove that, for any ring $R$, there exists a {\em maximal} left flat bimorphism $\overline{\varphi} \colon R\to \overline{Q}$, satisfying the following universal property: for any left flat bimorphism  $\varphi \colon R\to Q$, there exists a unique ring morphism $\beta\colon Q\to\overline{Q}$ such that $\beta\varphi=\overline{\varphi}$. Since the maximal left flat bimorphism is the solution of a universal property, $\overline{Q}$ is  unique up to isomorphism, in the following sense: if $\overline{\varphi} \colon R\to \overline{Q}$ and $\overline{\varphi_0} \colon R\to \overline{Q_0}$ are any two maximal left flat bimorphism, there exists a unique ring isomorphism $\beta\colon \overline{Q}\to\overline{Q_0}$ such that $\beta\varphi=\overline{\varphi_0}$.

\bigskip

For any ring $R$, we can consider the set $\Cal L_R$ of all subrings $Q$ of the maximal ring of quotients $Q_{\max}(R)$ such that the inclusion $R\to Q$ is a bimorphism and $_RQ$ is flat, and we can partially order $\Cal L_R$ by set inclusion.  Then  $\Cal L_R$ is a bounded complete lattice, where (1) the least element of $\Cal L_R$ is $R$, (2) the least upper bound of two elements $Q,Q'\in \Cal L_R$ is the ring generated by $Q$ and $Q'$, that is, the set of all finite sums of products of the form $q_1q'_1\dots q_nq'_n$, with $q_1,\dots,q_n\in Q$, $q'_1,\dots,q'_n\in Q'$, (3) the greatest lower bound of two elements $Q,Q'\in \Cal L_R$ is the union of all the subrings in $\Cal L_R$ contained in $Q\cap Q'$, and (4) the greatest element of $\Cal L_R$ is the ring $\overline{Q}$, where $\overline{Q}$ is the subring of $Q_{\max}(R)$ corresponding to the maximal left flat bimorphism $\overline{\varphi} \colon R\to \overline{Q}$ considered in the previous paragraph \cite[proof of Theorem~XI.4.1]{Stenstrom}.

\bigskip

Recall that a left $R$-module $_RD$ is {\em divisible} if $D=ID$ for every $I\in\Cal F$ (equivalently, if $M\otimes_RD=0$ for every torsion right $R$-module $M_R$ \cite[Proposition~VI.9.1]{Stenstrom}). $h$-divisible left $R$-modules are divisible.

\begin{theorem}\label{2.2} {\rm (1)} For every right $R$-module $M_R$, there is a short exact sequence of right $R$-modules \begin{equation}
\xymatrix{
0 \ar[r] & M_R/t(M_R) \ar[r] & M\otimes_R Q \ar[r] & M\otimes_R K \ar[r] & 0.
}\label{a}\tag{a}\end{equation}

{\rm (2)} For every left $R$-module $_RB$, there are two short exact sequences of left $R$-modules \begin{equation}
\xymatrix{
0 \ar[r] & \Hom({}_RK_R, {}_RB) \ar[r] & \Hom({}_RQ_R, {}_RB)  \ar[r] & h(_RB) \ar[r] & 0
}\label{b}\tag{b}\end{equation}
and \begin{equation}
\xymatrix{
0 \ar[r] & {}_RB/h(_RB) \ar[r] & \Ext^1_R({}_RK_R,{}_RB) \ar[r] & \Ext^1_R({}_RQ_R,{}_RB)  \ar[r] & 0.
}\label{c}\tag{c}\end{equation}\end{theorem}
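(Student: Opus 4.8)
The three exact sequences in the statement all arise by applying a single well-chosen functor to the fundamental short exact sequence of $R$-$R$-bimodules
\[
0 \longrightarrow R \longrightarrow Q \longrightarrow K \longrightarrow 0
\]
and reading off the resulting long exact sequence: for part~(1) the functor is $M_R\otimes_R(-)$, and for part~(2) it is the contravariant functor $\Hom_R({}_R(-),{}_RB)$. Beyond general homological algebra, the only inputs are the flatness of ${}_RQ$, the projectivity of ${}_RR$, properties (15) and (16), and the left-module version of \cite[Lemma~3.3]{AS} identifying the image of the canonical map $\Hom(Q,-)\to(-)$ with the largest $h$-divisible submodule. One should note that part~(1) genuinely uses flatness of ${}_RQ$, whereas part~(2) in fact requires only the (left/right symmetric) hypotheses of Section~\ref{0}.

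\textbf{Part (1).} I would tensor the displayed bimodule sequence on the left with $M_R$. Since ${}_RQ$ is flat, $\Tor_1^R(M_R,{}_RQ)=0$, so the long exact $\Tor$-sequence degenerates to the four-term exact sequence $0\to\Tor_1^R(M_R,{}_RK)\to M\otimes_R R\to M\otimes_R Q\to M\otimes_R K\to 0$. Using the canonical identification $M\otimes_R R\cong M_R$ and property~(16), the leftmost map becomes an embedding $t(M_R)\hookrightarrow M_R$, and by~(15) its image coincides with the kernel of the canonical map $M_R\to M\otimes_R Q$ (the iso of~(16) is realized precisely by this embedding — this is how~(16) is deduced from~(15)). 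Hence the image of $M_R\to M\otimes_R Q$ is a copy of $M_R/t(M_R)$, and by exactness it equals the kernel of $M\otimes_R Q\to M\otimes_R K$; the latter map is onto by right exactness of $\otimes$. This is exactly sequence~(a) — indeed it is the exact sequence already displayed in the proof of~(16).

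\textbf{Part (2).} I would apply $\Hom_R({}_R(-),{}_RB)$ to the same bimodule sequence, regarded as a short exact sequence of left $R$-modules; the resulting abelian groups inherit left $R$-module structures from the right $R$-module structures on $R$, $Q$, $K$, which is precisely the meaning of the notation $\Hom({}_RQ_R,{}_RB)$, etc. Since ${}_RR$ is projective, $\Ext^1_R({}_RR_R,{}_RB)=0$, and $\Hom({}_RR_R,{}_RB)\cong{}_RB$ canonically (the restriction map along $\varphi$ corresponds to evaluation at $1$). Thus the long exact sequence takes the form $0\to\Hom({}_RK_R,{}_RB)\to\Hom({}_RQ_R,{}_RB)\to{}_RB\to\Ext^1_R({}_RK_R,{}_RB)\to\Ext^1_R({}_RQ_R,{}_RB)\to 0$. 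By the left-module analogue of \cite[Lemma~3.3]{AS}, the image of $\Hom({}_RQ_R,{}_RB)\to{}_RB$ is $h({}_RB)$. Truncating at this image yields the short exact sequence~(b); extracting the cokernel ${}_RB/h({}_RB)$ of the same map, which by exactness injects into $\Ext^1_R({}_RK_R,{}_RB)$ with cokernel $\Ext^1_R({}_RQ_R,{}_RB)$, yields~(c).

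\textbf{Main point to watch.} There is no substantial obstacle here; the work is essentially bookkeeping. The one subtlety is that the hypotheses of this section are no longer left/right symmetric (cf.\ Remark~\ref{VHOU}(a)), so one must track carefully which one-sided structure is used at each step, and one must justify invoking the left-module versions of \cite[Lemma~3.3]{AS} and of the notion of $h$-divisibility. This is legitimate because ${}_RQ$ flat implies $\Tor_1^R(Q,Q)=0$, so all (left/right symmetric) hypotheses and conclusions of Section~\ref{0} are available in the present setting.
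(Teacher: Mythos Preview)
Your proposal is correct and follows essentially the same approach as the paper: apply $M_R\otimes_R(-)$ and $\Hom_R({}_R(-),{}_RB)$ to the bimodule sequence $0\to R\to Q\to K\to 0$, then identify the relevant pieces via properties~(15), (16) and the fact that the image of $\Hom({}_RQ_R,{}_RB)\to{}_RB$ is $h({}_RB)$. Your additional remark that part~(2) needs only the symmetric hypotheses of Section~\ref{0} is a nice observation not made explicit in the paper.
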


\begin{Proof} Consider the short exact sequence of $R$-$R$-bimodules \begin{equation} 0\rightarrow
R\rightarrow Q\rightarrow K\rightarrow 0\label{d}\tag{d}\end{equation} and tensor it with the right module $M_R$, getting the short exact sequence of right $R$-modules  $0\rightarrow \Tor_1^R(M_R, {}_RK)\rightarrow M_R\rightarrow M_R\otimes_R Q\rightarrow M_R\otimes_R K\rightarrow 0$. This and properties (15) and (16) give short exact sequence (\ref{a}).

If we apply the contravariant functor $\Hom(-,{}_RB)\colon R\lMod\to R\lMod$ to exact sequence (\ref{d}), we obtain the exact sequence of left $R$-modules $$\begin{array}{l}0 \rightarrow \Hom({}_RK_R, {}_RB) \rightarrow \Hom({}_RQ_R, {}_RB) \stackrel{\beta}{\longrightarrow} {}_RB  \rightarrow \\ \qquad\qquad\qquad\rightarrow\Ext^1_R({}_RK_R,{}_RB)  \rightarrow \Ext^1_R({}_RQ_R,{}_RB)  \rightarrow 0,\end{array}
$$ where $\beta$ is defined by $\beta(f)=f(1)$ for every $f\in  \Hom({}_RQ, {}_RB)$. Now the image of $\beta$ is clearly $h({}_RB)$, and from this we get the two short exact sequences of left $R$-modules in (2). \end{Proof}

The next corollary shows that the class of torsion $h$-divisible right $R$-modules is generated by the right $R$-module $K_R$. More generally, Tr$_M ({K})=h(t(M_R))$ for any right $R$-module $M_R$.

{\begin{corollary}\label{pjb}
A right $R$-module is torsion $h$-divisible  if and only if it is generated by $K_R$. In particular, the right $R$-module $M\otimes_R{}K_R$ is a torsion $h$-divisible module for every right $R$-module $M_R$.\end{corollary}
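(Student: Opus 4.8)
The plan is to obtain one implication directly from Lemma~\ref{pjb1}, and to prove the converse by combining the description of $h$-divisible modules in Lemma~\ref{ppp} with the torsion hypothesis.

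The implication ``$N_R\in\Gen(K_R)$ $\Rightarrow$ $N_R$ torsion and $h$-divisible'' is exactly Lemma~\ref{pjb1}, so nothing is needed there. For the converse, suppose $N_R$ is torsion and $h$-divisible. By Lemma~\ref{ppp}(2) I would fix a right $Q$-module $M$ and a short exact sequence $0\to U\to M\to N\to 0$ of right $R$-modules, where $U$ is a right $R$-submodule of $M_R$. Tensoring on the right with $Q$: by right exactness of $-\otimes_RQ$ together with $N\otimes_RQ=0$ (recall $N_R$ is torsion, see (6)), the map $U\otimes_RQ\to M\otimes_RQ$ is onto; composing with the surjective $Q$-action $M\otimes_RQ\to M$, $m\otimes q\mapsto mq$, I conclude that every $m\in M$ can be written $m=\sum_i u_iq_i$ with $u_i\in U$ and $q_i\in Q$.

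Next, for each $u\in U$ I would introduce $f_u\colon Q_R\to N_R$, $f_u(q)=uq+U$, where $uq$ is computed in the $Q$-module $M$. A routine check shows that $f_u$ is right $R$-linear and that $f_u(\varphi(R))=0$, the latter precisely because $U$ is a right $R$-submodule of $M_R$; hence $f_u$ factors through $K=Q/\varphi(R)$, so $\im f_u\subseteq\Tr_N(K_R)$. Given $n\in N$, lifting it to some $m\in M$ and writing $m=\sum_i u_iq_i$ as above gives $n=\sum_i f_{u_i}(q_i)\in\Tr_N(K_R)$; thus $\Tr_N(K_R)=N$, i.e.\ $N_R\in\Gen(K_R)$. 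For the final clause, Theorem~\ref{2.2}(1) exhibits $M\otimes_RK$ as a homomorphic image of the right $Q$-module $M\otimes_RQ$, so $M\otimes_RK$ is $h$-divisible by Lemma~\ref{ppp}, while $(M\otimes_RK)\otimes_RQ\cong M\otimes_R(K\otimes_RQ)=0$ by (10) shows it is torsion by (6); the equivalence just established then gives $M\otimes_RK\in\Gen(K_R)$. The identity $\Tr_M(K_R)=h(t(M_R))$ mentioned before the corollary follows too: every homomorphic image of $K_R$ inside $M_R$ is torsion and $h$-divisible (Lemma~\ref{pjb1}), hence contained in the largest $h$-divisible submodule $h(t(M_R))$ of $t(M_R)$, while conversely $h(t(M_R))$ is itself torsion and $h$-divisible, hence lies in $\Gen(K_R)$.

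I expect the crux to be the second paragraph: translating ``$N$ torsion'' into the statement that $U$ generates $M$ as a $Q$-module, and then observing that left multiplication by $u\in U$ on the $Q$-module $M$, read modulo $U$, annihilates $\varphi(R)$ and therefore descends to a homomorphism out of $K$. Once the maps $f_u\colon K\to N$ are in hand, checking that their images span $N$ is immediate, and the rest is bookkeeping.
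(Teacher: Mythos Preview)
Your proof is correct and follows essentially the same strategy as the paper's: both exploit the torsion hypothesis to show that the kernel $U$ of a surjection from a $Q$-module $M$ onto the given module satisfies $U\otimes_RQ\twoheadrightarrow M\otimes_RQ\cong M$, and then observe that for $u\in U$ the map $q\mapsto uq+U$ vanishes on $\varphi(R)$ and hence descends to $K$. The only differences are organizational---you work with one global surjection $M\to N$ from a $Q$-module, whereas the paper works element-by-element with a single $g\colon Q\to M$ whose kernel $S$ plays the role of your $U$---and in the final clause, where the paper tensors an epimorphism $R^{(X)}\to M$ with $K$ to see directly that $M\otimes_RK\in\Gen(K_R)$, while you verify torsion and $h$-divisible separately.
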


\begin{Proof} 
Right $R$-modules generated by $K$ are torsion $h$-divisible by Lemma~\ref{pjb1}.

Conversely, assume that $M$ is a torsion $h$-divisible module. To see that $M$ is generated by $K$, we must show that,
for every $h\in M$, there exists $n \geq 1$ and $f \colon K^n \to M$ with $h \in f(K^n)$. Fix an element $h\in M$. 
By Lemma~\ref{ppp}(1), there exists $g \colon Q \to M$ such that $g (1) = h$.
 Set $S := \ker(g)$. Since $M$ is torsion, we have that $Q/S$ is torsion.
So $S\otimes Q = Q\otimes Q$, hence there exists $n \geq 1$ such that
 $1 = \sum_{i= 1} ^n s_i q_i $, where $s_i \in S$  and $q_i \in Q$.
Define a map $\varphi \colon K ^{n} \to Q/S$ setting, for all $t_1, \cdots, t_n \in Q$,
$\varphi(t_1+R, \cdots, t_n+R) = \sum _{i= 1}^n s_i t_i+S$.
If all the elements $t_i$ belong to $R$, then  $\sum _{i = 1}^n s_it_i \in S$, so this map $\varphi$ is well defined and is an $R$-module homomorphism.  The composite mapping of $\varphi \colon K ^{n} \to Q/S$  and the monomorphism $Q/S\to M$ induced by $g$ is the required mapping $f\colon K^n\to M$ whose image contains~$h$.

For the last part of the statement, apply the functor $-\otimes_R{}K_R\colon\Mod R\to\Mod R$ to an 
epimorphism $R_R^{(X)}\to M_R$, where $R_R^{(X)}$ is a suitable free right $R$-module.
\end{Proof}

As a consequence of Corollary \ref{pjb}, we have the following.

\begin{corollary}\label{Zahra} 
For every torsion right $R$-module $M_R$, the canonical mapping $$\pi\colon \Hom({}_RK_R, M_R)\otimes_RK \to h(M_R)$$ defined by $\pi(f\otimes x)=f(x)$ for every $f\in \Hom(K_R,M_R)$ and $x\in K$ is a right $R$-module epimorphism.\end{corollary}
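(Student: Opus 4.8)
The plan is to exploit the surjection produced in Corollary~\ref{pjb}. Fix a torsion right $R$-module $M_R$. By Corollary~\ref{pjb}, the submodule $h(M_R)$ is itself torsion and $h$-divisible, so it is generated by $K_R$; thus there are an index set $X$ and an epimorphism $p\colon K_R^{(X)}\to h(M_R)$. For each $x\in X$ let $p_x\colon K_R\to h(M_R)\subseteq M_R$ denote the $x$-th component of $p$, so $p_x\in\Hom({}_RK_R,M_R)$ and $\sum_{x\in X}p_x(K)=h(M_R)$. This already says that $\Tr_{M_R}(K)\supseteq h(M_R)$; combined with Lemma~\ref{pjb1} (which gives $\Tr_{M_R}(K)\subseteq h(M_R)$ since every image of $K$ in $M_R$ is $h$-divisible, hence lands in $h(M_R)$) we get $\Tr_{M_R}(K)=h(M_R)$, the assertion preceding Corollary~\ref{pjb}.

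Now I would identify the canonical map $\pi$ with the evaluation map $\Tr$. Concretely, the right $R$-module structure on $\Hom({}_RK_R,M_R)$ comes from the left $R$-action on ${}_RK$, and $\pi(f\otimes x)=f(x)$ is well defined because, for $r\in R$, $\pi(fr\otimes x)=(fr)(x)=f(rx)=\pi(f\otimes rx)$, so $\pi$ is balanced and $R$-linear on the right via the right action on $K$ in the second tensor factor. To see surjectivity, take any $h\in h(M_R)$. Since $\sum_{x\in X}p_x(K)=h(M_R)$, we may write $h=\sum_{i=1}^n p_{x_i}(y_i)$ for finitely many $x_i\in X$ and $y_i\in K$. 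Then $h=\pi\bigl(\sum_{i=1}^n p_{x_i}\otimes y_i\bigr)$, so $h$ lies in the image of $\pi$. Hence $\pi$ is an epimorphism onto $h(M_R)$.

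I do not expect any serious obstacle: the only points requiring a line of care are (a) checking that $\pi$ is a well-defined right $R$-module homomorphism — which reduces to the balancedness computation above — and (b) invoking Corollary~\ref{pjb} to know that $h(M_R)$ (not $M_R$ itself) is generated by $K_R$. The latter is exactly why the codomain of $\pi$ is $h(M_R)$ rather than $M_R$: an arbitrary torsion module need not be $h$-divisible, so one cannot expect $\pi$ to hit more than the $h$-divisible part. A slicker packaging, which I might use instead of the element-chase, is to observe that the image of $\pi$ equals $\Tr_{M_R}(K)$ by definition of the trace, and then quote the displayed identity $\Tr_{M_R}(K)=h(t(M_R))=h(M_R)$ (the last equality because $M_R$ is torsion) established right after Corollary~\ref{pjb}.
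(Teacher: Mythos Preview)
Your proposal is correct and follows essentially the same route as the paper: both invoke Corollary~\ref{pjb} to see that $h(M_R)$ is generated by $K_R$, and then express any element of $h(M_R)$ as a finite sum $\sum f_i(k_i)=\pi(\sum f_i\otimes k_i)$. The only cosmetic difference is in checking that the image of $\pi$ lands in $h(M_R)$: you argue via $\Tr_{M_R}(K)\subseteq h(M_R)$ using Lemma~\ref{pjb1}, while the paper observes that $\Hom(K_R,M_R)\otimes_RK$ is a quotient of the $Q$-module $\Hom(K_R,M_R)\otimes_RQ$ and hence $h$-divisible.
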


\begin{Proof} The right $R$-module $\Hom({}_RK_R, M_R)\otimes_RK$ is a homomorphic image of the right $R$-module $\Hom({}_RK_R, M_R)\otimes_RQ$, which is a right $Q$-module. Thus the image of the canonical right $R$-module morphism $\Hom({}_RK_R, M_R)\otimes_RK \to M_R$ is contained in $h(M_R)$.
Conversely, note that $h(M) $ is torsion and $h$-divisible, so that $h(M) $ is generated by $K_R$ by Corollary~\ref{pjb}. Thus, if $x\in h(M)$, then there exists $n \geq 1$ such that
 $x$ belongs to the image of a morphism $f\colon K_R^n\to M_R $. Let $y = (k_1, \cdots, k_n)$ be such that $x = f(y)$. 
 For each $i=1,\dots,n$, let $\iota _i \colon K \to K^n$ be the canonical map. 
 Then  $\pi (\sum f \iota _i \otimes k_i ) = x$.  Thus  $\pi$ is an  epimorphism.
\end{Proof}

{\em Until the end of this section, we will consider {\em left} $R$-modules.} 

Define the class of {\em Matlis-cotorsion} left $R$-modules by $_R\mathcal {MC}:={} _RQ ^ \bot$ and
 the class of {\em strongly flat} left $R$-modules by $_R\mathcal {SF}:= {}^\bot(_R\mathcal {MC})$.
A left module $_RM$ will be said to be {\em Enochs-cotorsion} if
$\Ext_R^1 (_RF, {}_RM) = 0$ for all flat left $R$-modules $_RF$. Their class will be denoted
by $_R\mathcal{EC}$.
If $_R\mathcal {F}$ is the class of flat left $R$-modules, then
$(_R\mathcal {F}, _R\mathcal{EC})$ is a cotorsion pair \cite [ Lemma 7.1.4 ]{Jenda}. 
 Since $_RQ$ is flat, $Q^ \bot \supseteq \Cal F ^\bot = {} _R\mathcal{EC}$ and since $^\bot (_R\mathcal{EC}) = \mathcal{F}$, 
we have that strongly flat modules are flat.
Notice that the concept of Enochs-cotorsion left $R$-module is an ``absolute concept'', in the sense that it depends only on the ring $R$, while the concept of Matlis-cotorsion left $R$-module is a ``relative concept'', in the sense that it also depends on the choice of the overring $Q$ of $R$ with $_RQ$ flat.

A class $\Cal C$ of left $R$-modules is {\em precovering} if, for each left module $_RM$,
there exists a morphism $f\in \Hom_R(_RC,\, {}_RM)$ with $C\in\Cal C$ such that each morphism
$f_0\in\Hom(_RC_0,{}_RM)$ with $C_0\in\Cal C$ factors through $f$. Such an $f$ is called a {\em $\Cal C$-precover} of
$_RM$.

A precovering class $\Cal C$ of modules is called {\em special precovering} if every left $R$-module $_RM$ has a $\Cal C$-precover $f\colon C\to M$ which is an epimorphism and with 
$\ker(f)\in\Cal C^\bot$. Moreover, $\Cal C$ is called a {\em covering class} if every left  $R$-module $M$ has a $\Cal C$-precover $f \colon C \to M$ with the property that for every endomorphism $g$ of $C$ with $fg = f$, the endomorphism $g$ is necessarily an automorphism
of $C$. Such a $\Cal C$-precover $f$ is then called a
$\Cal C${\em -cover} of $_RM$. Dually, we define {\em preenveloping, special preenveloping,} and
{\em enveloping} classes of modules.
A cotorsion pair $\mathfrak C=(\Cal A, \Cal B)$ is {\em complete} if $\Cal A$ is a special precovering class (equivalently, if $\Cal B$ is a special preenveloping class \cite{Sa}). For instance, every
cotorsion pair generated by a set of modules is complete.

Note that, by
\cite[Theorem~6.11] {approx},  $(_R\mathcal{SF} ,{}_R\mathcal{MC})$ is a
complete cotorsion pair. Thus  every left $R$-module has a special
${}_R\mathcal{MC}$-preenvelope and  every left $R$-module has a special ${}_R\mathcal{SF}$-precover.

Now recall that  a left $R$-module $_RG$ is said to be {\em $\{Q\}$-filtered} if there exists an ordinal $\rho$ such that
$_RG$ is the union of a well-ordered ascending chain $\{\,G_\sigma \mid \sigma < \rho\,\}$
of submodules with $G_0=0$, $G_{\sigma+1}/G_\sigma\cong Q$
for every ordinal $\sigma<\rho$ and $G_\sigma=\bigcup_{\gamma<\sigma}G_\gamma$ for every limit ordinal $\sigma\le\rho$.
By \cite[Corollary  6.13]{approx}, the class $_R\mathcal{SF} $ consists of
all summands of
modules $_RN$ such that $_RN$ fits into an exact sequence of the form
$$ 0  \to {}_RF \to {}_RN \to {}_RG \to 0$$
where ${}_RF$ is free and ${}_RG$ is $\{Q\}$-filtered.

From (5), we get that:

\begin{lemma}\label{ExSten}
If $M$ and $N$ are projective left $Q$-modules, then $\Ext^1_R(M, N) = 0$.
\end{lemma}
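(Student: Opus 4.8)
The plan is to reduce the computation of $\Ext^1_R$ to a computation of $\Ext^1_Q$ via property~(5), and then use projectivity over $Q$. First I would observe that the standing hypotheses of this section (the inclusion $\varphi\colon R\to Q$ is a bimorphism and ${}_RQ$ is flat) imply in particular that $\Tor_1^R(Q,Q)=0$, so that all the properties (1)--(10) of Section~\ref{0} are available; in particular property~(5) applies. Applying the left-module version of~(5) to the pair of left $Q$-modules $M$ and $N$ yields a natural isomorphism
$$\Ext^1_R({}_RM,{}_RN)\cong\Ext^1_Q({}_QM,{}_QN).$$
Then, since $M$ is a projective left $Q$-module, $\Ext^1_Q({}_QM,-)$ vanishes on all left $Q$-modules, so in particular $\Ext^1_Q({}_QM,{}_QN)=0$; combining with the displayed isomorphism gives $\Ext^1_R(M,N)=0$.

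There is essentially no obstacle here: the content is entirely carried by property~(5), which was already established (citing \cite[Theorem~4.8]{Schofield}), and the rest is the elementary fact that $\Ext^1$ over any ring vanishes when the first argument is projective. I would note in passing that the hypothesis that $N$ be projective over $Q$ is not actually used --- projectivity of $M$ over $Q$ alone suffices --- but stating it for both modules is harmless and matches the way the lemma will be invoked later.
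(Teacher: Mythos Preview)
Your proposal is correct and matches the paper's approach exactly: the paper simply writes ``From~(5), we get that:'' before the lemma, i.e., it invokes property~(5) to identify $\Ext^1_R(M,N)\cong\Ext^1_Q(M,N)$ and then uses projectivity of $M$ over $Q$. Your observation that projectivity of $N$ is superfluous is also correct.
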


Lemma \ref{ExSten} implies that a $Q$-filtered left $R$-module is a free $Q$-module (see, for example, 
  \cite [paragraph before the statement of Lemma~6.15] {approx}).  
Thus $_RG$ is a free $Q$-module. 
Therefore the class $_R\mathcal{SF}$ consists of
all summands of
modules $_RN$ such that $_RN$ fits into an exact sequence of the form
$$ 0  \to {}_RF \to {}_RN \to {}_RG \to 0$$
where ${}_RF$ is a free left $R$-module and ${}_RG$ is a free left $Q$-module.

\bigskip

 It is well known that every left module has an Enochs-cotorsion envelope.

\begin{theorem}
If $Q$  is a left perfect ring, then every left $R$-module has an 
$\mathcal{MC}$-envelope.
\end{theorem}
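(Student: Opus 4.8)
\begin{Proof}
The plan is to exploit the completeness of the cotorsion pair $({}_R\mathcal{SF},{}_R\mathcal{MC})$ established above. Completeness already gives, for every left $R$-module ${}_RM$, a special ${}_R\mathcal{MC}$-preenvelope, that is, a short exact sequence $0\to {}_RM\to {}_RC\to {}_RA\to 0$ with ${}_RC\in{}_R\mathcal{MC}$ and ${}_RA\in{}_R\mathcal{SF}$; what has to be done is to turn such a special preenvelope into a genuine ${}_R\mathcal{MC}$-\emph{envelope} by the usual minimalisation, and left-perfectness of $Q$ is precisely the extra ingredient that should make this possible.

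The route I would take is to show that $({}_R\mathcal{SF},{}_R\mathcal{MC})$ is a \emph{perfect} cotorsion pair: by the standard fact that a complete cotorsion pair whose left-hand class is closed under direct limits is perfect (see \cite{approx}), its right-hand class ${}_R\mathcal{MC}$ would then be an enveloping class, as required. So it would suffice to prove that ${}_R\mathcal{SF}$ is closed under direct limits, and this is where the hypothesis on $Q$ enters. Using the structural description recorded earlier — a left $R$-module is strongly flat exactly when it is a direct summand of a module ${}_RN$ fitting into an extension $0\to {}_RF\to {}_RN\to {}_RG\to 0$ with ${}_RF$ free over $R$ and ${}_RG$ free over $Q$ — one analyses a direct system of strongly flat modules via its structure sequences. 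The ``$Q$-layers'' assemble in the colimit into a flat left $Q$-module, which, since $Q$ is left perfect, is projective over $Q$ and hence again strongly flat over $R$; and the fact that every left $Q$-module has a projective cover over the perfect ring $Q$ is what makes it possible to keep track of these layers and, in the preenvelope picture, to split off from ${}_RC$ any superfluous direct summand lying in ${}_R\mathcal{SF}\cap{}_R\mathcal{MC}$.

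The step I expect to be the main obstacle is exactly this closure of ${}_R\mathcal{SF}$ under direct limits — equivalently, the minimalisation itself. The difficulty is that a direct limit of free left $R$-modules is in general only flat, not projective over $R$, and not every flat left $R$-module is strongly flat; so one cannot merely pass to the colimit termwise in the structure sequences. The real content of the argument is therefore to reconcile the $R$-free layer with the $Q$-free layer along the (arbitrary) transition maps of the direct system, and it is here — not in any purely formal step — that left-perfectness of $Q$ has to be put to work, through the existence of projective covers of left $Q$-modules. If this proves awkward, an alternative would be to construct the ${}_R\mathcal{MC}$-envelope directly from the Enochs-cotorsion envelope, which always exists: one would use the inclusion ${}_R\mathcal{EC}\subseteq{}_R\mathcal{MC}$ together with the exact sequence (c) of Theorem~\ref{2.2}, noting that after factoring out the largest $h$-divisible submodule the ``defect'' term $\Ext^1_R({}_RQ_R,-)$ carries a natural left $Q$-module structure, so that left-perfectness of $Q$ should force it, through its projective cover over $Q$, to be small enough to be removed, thereby producing minimality.
\end{Proof}
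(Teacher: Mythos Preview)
Your main line of attack has a genuine gap. You propose to show that ${}_R\mathcal{SF}$ is closed under direct limits, but this would prove too much: free left $R$-modules are strongly flat (take $G=0$ in the structure sequence), and by Lazard every flat left $R$-module is a direct limit of free ones, so closure under direct limits would force ${}_R\mathcal{SF}={}_R\mathcal{F}$. In the commutative case this is exactly condition~(ii) in the Fuchs--Salce list (equivalently, condition~(i) of Theorem~\ref{7.1}), and it is \emph{not} a consequence of $Q$ being perfect --- any commutative domain that is not almost perfect (for instance a noetherian domain of Krull dimension $\ge 2$) has a perfect field of fractions $Q$ yet fails ``flat ${}={}$ strongly flat''. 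You correctly flag this step as ``the main obstacle'' and even diagnose why (the $R$-free layer degenerates under direct limits to an arbitrary flat module); the point is that the obstacle is not merely technical but genuinely insurmountable along this route, and your hope that projective covers over $Q$ will somehow repair the $R$-layer cannot be realised, since a successful repair would establish ${}_R\mathcal{SF}={}_R\mathcal{F}$ in general. Your alternative via the Enochs-cotorsion envelope is too vague to assess.

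The paper avoids the problem by replacing ${}_R\mathcal{SF}$ with the much smaller class $\mathcal{P}$ of projective left $Q$-modules, viewed as left $R$-modules. One checks: (a)~$\mathcal{P}^\bot=Q^\bot={}_R\mathcal{MC}$; (b)~$\mathcal{P}$ is closed under extensions in $R\lMod$ by Lemma~\ref{ExSten}, and under direct limits because $Q$ is left perfect (Bass); (c)~the special ${}_R\mathcal{MC}$-preenvelope $0\to M\to P\to N\to 0$ supplied by \cite[Theorem~6.11]{approx} has $N$ a $\{Q\}$-filtered module, hence a free left $Q$-module, so $N\in\mathcal{P}$. With these three facts, \cite[Theorem~5.27]{approx} delivers the envelope. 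The insight you are missing is that one need not close the whole left-hand class of the cotorsion pair under direct limits; it suffices to exhibit a subclass, closed under extensions and direct limits, that already contains the cokernels of the special preenvelopes --- and the $Q$-projectives do exactly this job.
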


\begin{proof}
Let  $\mathcal{P}$ be the class of all projective left $Q$-modules, and consider $\mathcal{P}$ as a class of left $R$-modules.
By Lemma~\ref{ExSten}, this class of left $R$-modules is closed under extensions.
Clearly $\mathcal{P} ^\bot  =  {Q} ^\bot $, because if, for some left $R$-module $M$, $\Ext^1_R(Q,M)=0$, then $\Ext^1_R(\bigoplus Q,M)\cong\prod \Ext^1_R(Q,M)=0$. On the other
hand, $Q$ is left perfect, and so, by Bass' theorem, every direct limit of
projective left
$Q$-modules is projective. Thus the class $\mathcal{P} $ is closed under
direct limits.
Now assume that $M$ is a left $R$-module. By \cite [Theorem 6.11]{approx},
there exists a short exact sequence $0 \to M \to  P \to N \to 0$ where $ M \to P$ is a special $\mathcal{MC}$-preenvelope and
$P$ is 
is the union of a continuous chain of submodules, $\left\{\,P_ \alpha \mid  \alpha <  \lambda \,\right\}$
 such that $P_0 = M$ and $P_{\alpha +1 } / P_\alpha$ is isomorphic to a direct sum of copies of $Q$
 for each $\alpha < \lambda$. Since $N \cong P/M$, it follows that 
 $N$ is $\{Q\}$-filtered. Thus 
 $N$ is a free $Q$-module by Lemma~\ref{ExSten}, and so $N$ is an element of
$\mathcal{P}$. Therefore $M$ has an $\mathcal{MC}$-envelope \cite [Theorem 5.27]{approx}.
\end{proof}

A left $R$-module $_RM$ is called {\em weak-injective} if $\Ext^1_R(I, M) = 0$ for all modules $I$ of weak dimension $\leq 1$.

\begin{Lemma}\label{3.6} Weak-injective left $R$-modules are  $h$-divisible and Matlis-cotorsion.\end{Lemma}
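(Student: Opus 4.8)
*Weak-injective left $R$-modules are $h$-divisible and Matlis-cotorsion.*

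The plan is to derive both conclusions from the single fact that $_RQ$ and $_RK$ both have weak dimension $\le 1$, so that the defining $\Ext^1$-vanishing for a weak-injective module applies to each of them. First I would check the weak-dimension bounds. Since $_RQ$ is flat by the standing hypothesis of this section, $\wed({}_RQ)=0\le 1$. For $_RK$, apply the functor $-\otimes_R K$ (or rather use the flat resolution): the short exact sequence of bimodules $0\to R\to Q\to K\to 0$ exhibits $_RK$ as having a length-one flat resolution $0\to R\to Q\to K\to 0$ with $R$ and $Q$ flat as left $R$-modules, hence $\wed({}_RK)\le 1$. (Here I would invoke exactly sequence (\ref{d}) and the flatness of $_RQ$; that $R$ is flat over itself is trivial.)

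Next, let $_RM$ be weak-injective. By definition $\Ext^1_R(I,M)=0$ for every left $R$-module $I$ with $\wed(I)\le 1$, so in particular $\Ext^1_R({}_RQ,M)=0$ and $\Ext^1_R({}_RK,M)=0$. The first of these says precisely that $_RM$ is Matlis-cotorsion, by the definition $_R\mathcal{MC}={}_RQ^\bot$. For $h$-divisibility I would apply the long exact $\Hom/\Ext$ sequence obtained from (\ref{d}) — this is exactly the sequence displayed in the proof of Theorem~\ref{2.2}(2), with $B$ replaced by $M$:
\[
0\to\Hom({}_RK,M)\to\Hom({}_RQ,M)\xrightarrow{\ \beta\ }{}_RM\to\Ext^1_R({}_RK,M)\to\cdots
\]
where $\beta(f)=f(1)$ has image $h({}_RM)$. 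Since $\Ext^1_R({}_RK,M)=0$, the map $\beta$ is surjective, so $h({}_RM)={}_RM$, i.e. $_RM$ is $h$-divisible. (Equivalently, one could quote short exact sequence (\ref{c}) of Theorem~\ref{2.2}(2): its left term ${}_RM/h({}_RM)$ embeds in $\Ext^1_R({}_RK,M)=0$, forcing $h({}_RM)={}_RM$.)

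I do not anticipate a genuine obstacle here; the only point requiring a little care is making the weak-dimension bound on $_RK$ rigorous — one must use that a module admitting a length-$\le 1$ resolution by flat modules has weak dimension $\le 1$, which is standard, together with the section's hypothesis that $_RQ$ is left flat (this is where the argument would fail in the generality of Section~\ref{0}, where only $\Tor_1^R(Q,Q)=0$ is assumed). Everything else is a direct reading-off from the cotorsion-theoretic definitions and the already-established exact sequences.
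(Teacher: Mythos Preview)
Your proof is correct and follows essentially the same approach as the paper: both arguments use that $_RQ$ flat forces $\wed({}_RK)\le 1$, so $\Ext^1_R(K,M)=0$, and then read off both conclusions from sequence~(\ref{c}) of Theorem~\ref{2.2}. The only cosmetic difference is that the paper extracts Matlis-cotorsion from (\ref{c}) as well (since $\Ext^1_R(K,M)=0$ kills the whole sequence), whereas you obtain it directly from $\Ext^1_R(Q,M)=0$ via flatness of $_RQ$.
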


\begin{Proof} Let $_RM$ be a weak-injective module. Since $_RQ$ is flat, we have that $_RK$ has weak dimension $\leq 1$.
So $\Ext^1_R(K, M) = 0$. From Theorem~\ref{2.2}(c), we get that $M/h(M)=0$ and $\Ext^1(Q,M)=0$, that is, $M$ is $h$-divisible and Matlis-cotorsion.\end{Proof}

Therefore the class $\mathcal{WI}$ of weak-injective modules is a subclass of $\mathcal{HD}$. We denote by $_R {\mathcal{P}_1  },{}_R{\mathcal{F}_1}$ and $ {}_R{\mathcal{ D}}$ the classes of all left $R$-modules of projective dimension $\le 1$, of weak dimension $\le 1$ and divisible, respectively.

\begin{proposition}\label{4.4} Under the hypotheses of this section, the following conditions hold:

{\rm (i)} $^\bot(_R \mathcal{HD}) \subseteq{}_R {\mathcal{P}_1  } $. 

{\rm (ii)} If $_R{\mathcal{F}_1}  ={} ^\bot (_R{\mathcal{ D}})$, then $\Fdim(_QQ)= 0$ and so $Q$ is left perfect.
\end{proposition}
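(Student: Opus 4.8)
The plan is to prove the two parts separately, each by exhibiting the right exact sequence and applying the $\Ext$-vanishing hypotheses together with the structural facts (1)--(18).

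For part (i), I would start with an arbitrary module ${}_RA\in{}^\bot({}_R\mathcal{HD})$ and try to show $\pd({}_RA)\le 1$, i.e.\ $\Ext^2_R({}_RA,-)=0$, or equivalently that $\Ext^1_R({}_RA,-)$ vanishes on a suitable generating class. Since $h$-divisible modules include all homomorphic images of injectives, $\mathcal{HD}$ is large enough that $\Ext^1_R(A,E/M)=0$ for every injective $E\supseteq M$; feeding this into the long exact sequence obtained by applying $\Hom_R(A,-)$ to $0\to M\to E\to E/M\to 0$ shows $\Ext^2_R(A,M)=0$ for every ${}_RM$, which is exactly $\pd({}_RA)\le 1$. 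The only thing to check carefully is that $E/M$ is genuinely $h$-divisible — but that was recorded right after Lemma~\ref{ppp} (``homomorphic images of injective modules are $h$-divisible''), so this step is routine. Thus $^\bot({}_R\mathcal{HD})\subseteq{}_R\mathcal{P}_1$.

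For part (ii), assume $_R\mathcal{F}_1={}^\bot(_R\mathcal{D})$. The target $\Fdim(_QQ)=0$ says every left $Q$-module has projective dimension $0$ or $\infty$; equivalently (by the parenthetical remark in the introduction, \cite[Theorem~6.3]{Bass}) that $Q$ is left perfect and every simple left $Q$-module is a homomorphic image of an injective left $Q$-module. The natural route is: first show every flat left $R$-module has projective dimension $\le 1$ — indeed, since $_RQ$ is flat, $_RK$ has weak dimension $\le 1$, so $K\in{}_R\mathcal{F}_1$; but $K$ is torsion and $h$-divisible, hence (Lemma~\ref{pjb1}, Corollary~\ref{pjb}) generated by $K$, and in particular $K$ is divisible, so the hypothesis forces... — hmm, this needs more care. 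The cleaner approach: from $_R\mathcal{F}_1={}^\bot(_R\mathcal{D})$ one gets that $(_R\mathcal{F}_1,{}_R\mathcal{D})$ is a cotorsion pair, so $_R\mathcal{F}_1$ is closed under direct sums and summands and contains the projectives; moreover $_R\mathcal{D}=({}_R\mathcal{F}_1)^\bot$. Now use that weak dimension of left $Q$-modules, computed over $Q$, relates to weak dimension over $R$ via flatness of $_RQ$ and properties (4),(5),(18): a left $Q$-module flat over $Q$ is flat over $R$ (since $_RQ$ flat), hence of weak dimension $0\le 1$ over $R$, hence in $_R\mathcal{F}_1$, hence (by the hypothesis) in $^\bot(_R\mathcal{D})$; pairing this with property (5)/(18) which transfers $\Ext^1$ between $R$ and $Q$, deduce that flat left $Q$-modules have projective dimension $\le 1$ over $Q$. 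Then a standard argument (if all flats have $\pd\le 1$ and, using perfectness-type input, the Fdim cannot be $1$) upgrades this to $\Fdim(_QQ)=0$; actually the right statement is that over any ring, $\Fdim=0$ iff every flat module is projective iff the ring is left perfect, so what we really must show is that every flat left $Q$-module is projective, equivalently that $Q$ is left perfect.

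The main obstacle I anticipate is precisely part (ii): translating the $R$-side hypothesis $_R\mathcal{F}_1={}^\bot(_R\mathcal{D})$ into left-perfectness of $Q$. The delicate point is producing enough $Q$-modules that are simultaneously (a) visibly flat or low weak dimension over $R$ and (b) constrained by membership in $^\bot(_R\mathcal{D})$, so that one can detect non-projective flat $Q$-modules. I would handle this by taking a flat left $Q$-module $_QF$ that is not projective over $Q$ (if $Q$ were not left perfect such $F$ exists), viewing it as a left $R$-module — it is flat over $R$ by flatness of $_RQ$, so $_RF\in{}_R\mathcal{F}_1={}^\bot(_R\mathcal{D})$ — and then deriving a contradiction with the fact that $\pd_Q F$ must be finite (it is $0$ or $\infty$ a priori is what we want, but over $R$ we can bound it): by property (18), $\Ext^n_R(F,N)\cong\Ext^n_Q(F\otimes_R Q,N)\cong\Ext^n_Q(F,N)$ for every left $Q$-module $N$, and since $F\in{}^\bot(_R\mathcal{D})$ and every injective left $Q$-module is divisible as a left $R$-module (an injective $Q$-module is $h$-divisible over $R$, hence divisible), we get $\Ext^1_Q(F,-)=0$ on injective $Q$-modules, forcing $\pd_Q F\le 1$, hence $F$ not projective gives $\pd_Q F=1$. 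Finally, one rules out the existence of a flat $Q$-module of projective dimension exactly $1$: this is where I would invoke that a ring over which every flat module has $\pd\le 1$ but some flat is not projective cannot exist — more precisely, $\Fdim(_QQ)\in\{0\}\cup\{n\ge 2\}\cup\{\infty\}$ is false in general, so I must instead argue directly that $w.d.\le 1$ over $R$ combined with the cotorsion-pair hypothesis forces every flat left $R$-module (in particular every flat left $Q$-module) to actually be of projective dimension $\le 1$ AND that the class $_R\mathcal{D}$ being the whole right-hand side is large enough to squeeze flats into being projective; the cleanest finish is to quote that $(_R\mathcal{F}_1,{}_R\mathcal{D})=(_R\mathcal{P}_1,{}_R\mathcal{D})$ would follow, whence every flat left $R$-module has $\pd\le 1$, and then a direct-limit/Bass argument (using that the chain condition needed for perfectness is equivalent to ``flat $\Rightarrow$ projective'') applied on the $Q$-side yields $Q$ left perfect, i.e.\ $\Fdim(_QQ)=0$.
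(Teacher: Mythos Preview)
Your argument for part~(i) is correct and is exactly the paper's proof: embed an arbitrary $N$ in its injective hull $E$, note $E/N\in{}_R\mathcal{HD}$, and read off $\Ext^2_R(M,N)\cong\Ext^1_R(M,E/N)=0$.

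Part~(ii), however, has a genuine gap. Your chain of reasoning arrives at ``$\Ext^1_Q(F,-)=0$ on injective $Q$-modules, forcing $\pd_Q F\le 1$'', which is a non-sequitur: $\Ext^1_Q(F,E)=0$ for injective $E$ holds for \emph{every} $F$ and bounds nothing. You then correctly recognise that the subsequent attempts (ruling out flat $Q$-modules of $\pd$ exactly $1$, or a vague Bass/direct-limit argument) do not close the gap. The observation you are missing is that \emph{every} left $Q$-module, not just the injective ones, is $h$-divisible over $R$ (it is a quotient of a free $Q$-module) and hence lies in ${}_R\mathcal{D}$. Once you see this, the hypothesis ${}_R\mathcal{F}_1={}^\bot({}_R\mathcal{D})$ becomes immediately usable against $Q$-modules.

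The paper exploits this as follows. To show $\Fdim({}_QQ)=0$ it suffices to show that every left $Q$-module $M$ with $\pd_Q M\le 1$ is projective. Take a $Q$-projective resolution $0\to P_1\to P_0\to M\to 0$. Each $P_i$ is a summand of a free $Q$-module, hence flat over $R$ (since ${}_RQ$ is flat), so ${}_RM$ has weak dimension $\le 1$ and thus ${}_RM\in{}_R\mathcal{F}_1={}^\bot({}_R\mathcal{D})$. But $P_1$, being a $Q$-module, lies in ${}_R\mathcal{HD}\subseteq{}_R\mathcal{D}$, so $\Ext^1_R(M,P_1)=0$ and the sequence splits over $R$, hence over $Q$ by property~(3). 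Thus $M$ is $Q$-projective. Your flat-module approach could also be repaired along these lines, but note that it would directly yield only ``$Q$ is left perfect''; the paper's formulation (working with arbitrary $M$ of $\pd_Q\le 1$) gives $\Fdim({}_QQ)=0$ first, from which left perfectness follows.
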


\begin{proof}
(i) Assume that $M \in{} ^\bot (_R \mathcal{HD})$.
Let $_RN$ be a left $R$-module and $E$ be its injective hull.
Consider the exact sequence $0 \to N \to E \to E/N \to 0$. Since $_RE$ is injective, $\Ext^1_R(M,E/N)\cong \Ext^2_R(M,N)$.
Now $E/N$ is $h$-divisible and $M \in{} ^\bot  \mathcal{HD}$, so that $\Ext^1_R(M,E/N)=0$. Hence $\Ext^2 ({}_RM, {}_RN) = 0$.

(ii) It is enough to show that the $Q$-modules of projective dimension $\le 1$ are 
projective. Let $_QM$ be a $Q$-module of p.d.~$\le 1$. Then there exists an exact sequence $0 \to P_1 \to P_0 \to M \to 0$ 
with $P_1$ and $P_0$ projective $Q$-modules. Thus $_QP_i$ is a direct summand of $_QQ^{(X)}$, hence $_RP_i$ is a direct summand of the flat $R$-module $_RQ^{(X)}$.
So  $P_1$ and $P_0$ are flat $R$-modules, and hence $_RM$ is of weak dimension $\le1$. Since $_RM\in\Cal F_1={}^\bot(_R\Cal D)$ and $P_1\in{}_R\Cal H\Cal D\subseteq{}_R\Cal D$, 
the short exact sequence $0 \to P_1 \to P_0 \to M \to 0$ splits in $R\lMod$, and so in $Q\lMod$. Therefore $_QM$ is projective.  
\end{proof}

\section{$\Cal F$ is a 1-topology}\label{2}

As we have already said in (12), the Gabriel topology $\Cal F$ always has a basis consisting of finitely generated right ideals. Now we will suppose that the Gabriel topology $\Cal F$ is a 1-{\em topology}, that is, that $\Cal F$  has a basis consisting of principal right ideals \cite[Proposition~XI.6.1]{Stenstrom}. Thus $\Cal F$ is completely determined by the set $S:=\{\, s\in R\mid sR\in\Cal F\,\}$, which is a multiplicatively closed subset of $R$ satisfying: (1) If $a,b\in R$ and $ab\in S$, then $a\in S$. (2) If $s\in S$ and $a\in R$, then there are $t\in S$ and $b\in R$ such that $sb=at$ \cite[Proposition~VI.6.1]{Stenstrom}. Moreover, the elements of $S$ are not right zero-divisors in $R$, because $sR$ is a dense right ideal of $R$ for every $s\in S$, so $sR=(sR:1)$ has zero left annihilator  \cite[Proposition~VI.6.4]{Stenstrom}, and $s$ is not a right zero-divisor.

\medskip

For instance, consider the following trivial example. Suppose $Q=R$. There is not doubt that the identity $R\to Q$ is a bimorphism and that ${}_RR$ is a flat left $R$-module, so that $R$ is the least element in $\Cal L_R$. The corresponding multiplicatively closed subset  $S$ is then the set of all right invertible elements of $R$. The only torsion right $R$-module is the zero module. All right $R$-modules are torsion-free.

\medskip

Thus, in the rest of this section, {\em we will suppose that $R$ is a ring and $S$ is a multiplicatively closed subset of $R$ satisfying:} (1) {\em If $a,b\in R$ and $ab\in S$, then $a\in S$.} (2) {\em If $s\in S$ and $a\in R$, then there are $t\in S$ and $b\in R$ such that $sb=at$.} (3) {\em The elements of $S$ are not right zero-divisors.}

\begin{Lemma} Let $\Cal F$ be the Gabriel topology consisting of all right ideals $I$ of $R$ such that $I\cap S\ne 0$, let $R_{\Cal F}$ be the localization and $\varphi \colon R\to R_{\Cal F}$ be the canonical mapping. Then $\Cal F$ consists of dense right  ideals, $\varphi$ is a bimorphism  and ${}_RR_{\Cal F}$ is a flat left $R$-module.\end{Lemma}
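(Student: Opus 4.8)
The plan is to read off all three assertions from the basis $\{\,sR\mid s\in S\,\}$ of $\Cal F$, using Stenström's results on $1$-topologies and perfect Gabriel topologies for the structural input and supplying short self-contained arguments for the density statement and the injectivity of $\varphi$. To begin with, I would note that $1\in S$ (since $R=1R\in\Cal F$) and $0\notin S$ (otherwise $\Cal F$ would be improper), and that by \cite[Proposition~VI.6.1]{Stenstrom} conditions (1) and (2) already guarantee that $\Cal F$ is a Gabriel topology with $\{\,sR\mid s\in S\,\}$ as a basis and with $S=\{\,s\in R\mid sR\in\Cal F\,\}$; all the Gabriel-topology axioms reduce, via this basis, to the statements that $sR\cap tR$ again meets $S$ and that $(sR:a)$ meets $S$ for every $a\in R$, and both of these are immediate from the right Ore condition (2) together with the multiplicativity of $S$. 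Thus the canonical ring homomorphism $\varphi\colon R\to R_{\Cal F}$ into the localization is defined, and it suffices to work with the ideals $sR$.

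For the density statement, recall that a right ideal $D$ of $R$ is dense precisely when $x\,(D:y)\ne 0$ for all $0\ne x\in R$ and all $y\in R$, where $(D:y)=\{\,r\in R\mid yr\in D\,\}$; since overideals of dense right ideals are dense and every $I\in\Cal F$ contains some $sR$, it is enough to prove that $sR$ is dense for $s\in S$. Given $s\in S$ and $y\in R$, condition (2) provides $t\in S$ and $b\in R$ with $sb=yt$, so $yt\in sR$ and hence $t\in(sR:y)$; and for every $0\ne x\in R$ we have $xt\ne 0$, because $t\in S$ is not a right zero-divisor by (3). Therefore $x\,(sR:y)\ne 0$, and $sR$ is dense. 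For the injectivity of $\varphi$, observe that $\ker\varphi$ is the $\Cal F$-torsion submodule $t(R_R)$, and that $r\in t(R_R)$ means $\{\,x\in R\mid rx=0\,\}\in\Cal F$, i.e.\ $rs=0$ for some $s\in S$; by (3) this forces $r=0$, so $\varphi$ is a monomorphism.

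It remains to prove that $\varphi$ is an epimorphism of rings and that ${}_RR_{\Cal F}$ is a flat left $R$-module. Here I would invoke Stenström's treatment of $1$-topologies (\cite{Stenstrom}): since $\Cal F$ is a $1$-topology and $R$ is $\Cal F$-torsion-free by the previous paragraph, $R_{\Cal F}$ is the expected ``ring of fractions'', namely $\varphi(s)$ is invertible in $R_{\Cal F}$ for every $s\in S$ and every element of $R_{\Cal F}$ has the form $\varphi(a)\varphi(s)^{-1}$ with $a\in R$ and $s\in S$; equivalently, $\Cal F$ is a perfect Gabriel topology. The epimorphism property is then immediate, since two ring homomorphisms out of $R_{\Cal F}$ that agree on $\varphi(R)$ must agree on every $\varphi(a)\varphi(s)^{-1}$. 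For flatness, note that $R_{\Cal F}$, as a left $R$-module, is the union — directed by (2) — of its submodules $R\,\varphi(s)^{-1}$, $s\in S$; each of these is free of rank one over $R$, the map $r\mapsto r\,\varphi(s)^{-1}$ being an isomorphism ${}_RR\to R\,\varphi(s)^{-1}$ because $\varphi(s)$ is a unit of $R_{\Cal F}$. Hence ${}_RR_{\Cal F}$ is a direct limit of copies of ${}_RR$, and is therefore flat.

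The step I expect to be the main obstacle is this last one, i.e.\ justifying that the $\Cal F$-localization of $R$ really is the $1$-topology localization described above — that each $s\in S$ becomes invertible in $R_{\Cal F}$ and that these inverses generate $R_{\Cal F}$ as a ring over $R$ — without any pathology arising from the fact that $S$ need not be left reversible. This is precisely where the three conditions (1), (2), (3) are used together, and where one must rely on the structure theory in \cite{Stenstrom}; once it is in hand, the Gabriel-topology axioms, the density statement, and the injectivity of $\varphi$ are the elementary matters carried out above.
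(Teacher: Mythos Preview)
Your proof is correct, and the density argument is essentially the same as the paper's: both show that $(sR:a)$ contains some $t\in S$ (via the Ore condition~(2), respectively via axiom~T3) and then invoke condition~(3) to conclude that $(sR:a)$ has zero left annihilator, which is Stenstr\"om's criterion for density.

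The difference is one of completeness rather than method. The paper's proof only treats the density claim and leaves the remaining assertions---that $\varphi$ is injective, that it is an epimorphism of rings, and that ${}_RR_{\Cal F}$ is flat---entirely to Stenstr\"om's general theory of perfect Gabriel topologies (the references already listed as properties~(11)--(14) in Section~\ref{1}). You, by contrast, spell these out: injectivity from torsion-freeness of $R_R$, the epimorphism property from the fraction description of $R_{\Cal F}$, and flatness from writing ${}_RR_{\Cal F}$ as the directed union $\bigcup_{s\in S}R\varphi(s)^{-1}$ of free rank-one left $R$-modules. Your concern about the ``main obstacle'' is well placed but not an actual gap: conditions~(1)--(3) are precisely what Stenstr\"om's Proposition~VI.6.1 needs to identify $R_{\Cal F}$ with the right ring of fractions, so the invertibility of each $\varphi(s)$ and the fraction form of every element are guaranteed. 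Your argument is therefore a more self-contained version of what the paper gestures at.
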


\begin{Proof} In order to show that the right ideal $sR$ is dense for every $s\in S$, we must prove that, for every $s\in S$ and $a\in R$, the right ideal $(sR:a)$ has zero left annihilator \cite[Proposition~VI.6.4]{Stenstrom}. Now if  $s\in S$ and $a\in R$, then $(sR:a)\in\Cal F$ by  \cite[Property~T3 on Page 144]{Stenstrom}, so that $(sR:a)$ contains an element $t\in S$. Hence the right ideal $sR$ is dense because  the elements $t\in S$ is  not a right zero-divisor.
\end{Proof}

It follows that the torsion submodule of a right $R$-module $M_R$ is the set of all elements $x\in M_R$ for which there exists an element $s\in S$ with $xs=0$. In particular, a right $R$-module $M_R$ is torsion-free if right multiplication $\rho_s\colon M_R\to M_R$ by $s$ is an abelian group monomorphism for every $s\in S$. Dually, we will say that a right $R$-module $M_R$ is {\em divisible} if right multiplication $\rho_s\colon M_R\to M_R$ by $s$ is an abelian group epimorphism for every $s\in S$, that is, if $Ms=M$ for every $s\in S$. Every homomorphic image of a divisible right $R$-module is divisible. If $A$ is a submodule of a right $R$-module $B_R$ and if $A_R$ and $B/A$ are divisible, then $B$ is divisible. Any sum of divisible submodules is a divisible submodule, so that every right $R$-module $M_R$ contains a greatest divisible submodule, denoted by $d(M_R)$. A right $R$-module $M_R$ is {\em reduced} if $d(M_R)=0$. For every module $M_R$, $M_R/d(M_R)$ is reduced.

\begin{remark}{\em (a) It is very important to stress that all the concepts we have defined until now in Sections~\ref{1} and~\ref{2}, like divisible right $R$-module, reduced right $R$-module, $h$-divisible right or left $R$-module, and Matlis-cotorsion $R$-module are relative, in the sense that they depend on the fixed multiplicatively closed set $S$ (in Section~\ref{2}) or on the overring $Q$ of $R$ (in Section~\ref{1}). We have decided not to use a terminology  like 
$S$-divisible right $R$-module, $S$-reduced right $R$-module, $Q$-$h$-divisible right or left $R$-module, $Q$-Matlis-cotorsion $R$-module in order not to make the terminology itself too heavy.

(b) The localization $R_{\Cal F}$ is not the right ring of quotients $R[S^{-1}]$ of $R$ with respect to $S$ in general, as the following example shows.}\end{remark}

\begin{example} {\em Let $k$ be a division ring, $V_k$ an infinite dimensional right vector space over $k$ and 
$R:= \End(V_k)$. Since $R$ is Von Neumann regular, the only bimorphisms $R\to Q$ with $_RQ$ flat are isomorphisms \cite[Proposition~XI.1.4]{Stenstrom}. Thus, in this case, we have $\Cal {L}_R=\{R\}$, so that without loss of generality we can assume $Q=R$, $\Cal F=\{R\}$, $R_{\Cal F}=R$ and $S$ the set of all right invertible elements of $R$. The right invertible elements of $R$ are exactly the epimorphisms $V_k\to V_k$. Let us show that the right ring of quotients $R[S^{-1}]$ of $R$ with respect to $S$ does not exists. Suppose the contrary, and let $\varphi\colon R\to R[S^{-1}]$ denote the canonical morphism. Fix a direct-sum decomposition $V_k=U\oplus W$ with $U\cong W\cong V_k$. Then it is easy to construct epimorphisms $f,g\colon V_k\to V_k$ and monomorphisms $f',g'\colon V_k\to V_k$ with $ff'=1_V$, $gg'=1_V$, $fg'=0$, $gf'=0$ and $f'f+g'g=1_V$. As $f,g\in S$, it follows that $\varphi(f),\varphi(g)$ are invertible in $R[S^{-1}]$, with inverse $\varphi(f'),\varphi(g')$ respectively. Now $e:=f'f$ is an idempotent in $R$, with $1-e=g'g$. Thus $\varphi(f'f)=1$ in $R[S^{-1}]$, and similarly $\varphi(g'g)=1$ in $R[S^{-1}]$. It follows that $1=0$ in $R[S^{-1}]$, so that $R[S^{-1}]$ is the zero ring. Thus $\varphi(s)=0$ for every $s\in S$. Therefore $R$ is the zero ring as well, a contradiction. This proves that the localization $R_{\Cal F}=R$ is not the right ring of quotients $R[S^{-1}]$ of $R$ with respect to $S$.}\end{example}

\begin{proposition}\label{xyk} Suppose that the ring $Q$ is directly finite. Then:

(1) The elements of $S$ are regular elements of $R$, invertible in $Q$.

(2) The set $S$ is a right denominator set in $R$, and $Q$ is the right ring of quotients $R[S^{-1}]$ of $R$ with respect to $S$.
\end{proposition}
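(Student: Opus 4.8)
The plan is to prove the two assertions in order, using~(1) to establish~(2).

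For~(1), fix $s\in S$. Since $s\in sR\cap S$ we have $sR\in\Cal F$, and the right $R$-module $R/sR$ is torsion: given $a\in R$, condition~(2) yields $t\in S$ and $b\in R$ with $at=sb\in sR$, so $(a+sR)t=0$, which says exactly that $R/sR$ is killed elementwise by $S$, hence is torsion in the sense of Section~\ref{2}. By property~(6) this gives $(R/sR)\otimes_R Q=0$, and tensoring $0\to sR\to R\to R/sR\to 0$ with the flat left module ${}_RQ$ identifies $(R/sR)\otimes_RQ$ with $Q/\varphi(s)Q$; thus $\varphi(s)Q=Q$, i.e.\ $\varphi(s)$ is right invertible in $Q$. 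Because $Q$ is directly finite, a right inverse of $\varphi(s)$ is a two-sided inverse, so $\varphi(s)$ is a unit of $Q$. As $\varphi$ is a monomorphism of rings it is injective, and we identify $R$ with $\varphi(R)\subseteq Q$; then $s$ is a unit of $Q$. Finally, condition~(3) says $s$ is not a right zero-divisor in $R$, and if $sa=0$ with $a\in R$ then multiplying by $s^{-1}$ in $Q$ gives $a=0$, so $s$ is not a left zero-divisor either. Hence every element of $S$ is regular in $R$ and invertible in $Q$.

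For~(2), part~(1) shows that $S$ consists of regular elements; condition~(2) is precisely the right Ore condition for $S$, while right reversibility is vacuous, since $sa=0$ with $s\in S$ already forces $a=0$. Therefore $S$ is a right denominator set and the classical right ring of quotients $R[S^{-1}]$ exists, with canonical morphism $\iota\colon R\to R[S^{-1}]$. By~(1) the map $\varphi$ sends each element of $S$ to a unit of $Q$, so the universal property of $R[S^{-1}]$ produces a unique ring homomorphism $\psi\colon R[S^{-1}]\to Q$ with $\psi\iota=\varphi$. This $\psi$ is injective: every element of $R[S^{-1}]$ has the form $\iota(a)\iota(s)^{-1}$, and $\psi(\iota(a)\iota(s)^{-1})=0$ forces $\varphi(a)=0$, hence $a=0$ and $\iota(a)\iota(s)^{-1}=0$.

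It remains to show that $\psi$ is surjective, equivalently that $Q=R[S^{-1}]$; I expect this to be the crux of the argument. The point is that the Gabriel topology $\Cal F=\{\,I\mid I\cap S\neq\emptyset\,\}$ of Section~\ref{2} is exactly the topology associated with the classical localization at $S$, namely $\{\,I\mid R/I$ is $S$-torsion$\,\}$: both admit the cofinal family $\{\,sR\mid s\in S\,\}$, and if $I\cap S\neq\emptyset$ then condition~(2) shows $(I:a)\cap S\neq\emptyset$ for every $a\in R$, the converse being immediate on taking $a=1$. Since $R[S^{-1}]$ is a flat ring epimorphism of $R$ (a perfect localization) having this topology, and $Q=R_{\Cal F}$ is by construction the localization at the same topology, the uniqueness of localizations at a Gabriel topology (\cite[Ch.~XI]{Stenstrom}) forces $\psi$ to be an isomorphism. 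Alternatively one argues directly: by condition~(3) the module $R_R$ is torsion-free, so $R_{\Cal F}\cong\varinjlim_{s\in S}\Hom_R(sR,R)$, and since each $s\in S$ is regular the map $r\mapsto sr$ is an isomorphism $R\to sR$, whence $\Hom_R(sR,R)\cong R$; chasing through the colimit, every element of $Q=R_{\Cal F}$ equals $\varphi(a)\varphi(s)^{-1}$ for suitable $a\in R$ and $s\in S$, and therefore lies in the image of $\psi$. Either way $\psi$ is bijective and $Q\cong R[S^{-1}]$.
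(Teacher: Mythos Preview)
Your proof is correct and follows essentially the same line as the paper's. The paper's argument for~(1) is a single line---$sR\in\Cal F$ gives $sQ=Q$ directly from property~(12), hence $s$ is right invertible, hence invertible by direct finiteness---whereas you re-derive $sQ=Q$ by first checking $R/sR$ is torsion; this is just unpacking~(12) in the present setting. For~(2) the paper writes only ``follows immediately from~(1)'', so your explicit verification that $S$ is a right denominator set and that the induced map $R[S^{-1}]\to Q=R_{\Cal F}$ is an isomorphism (via uniqueness of the $\Cal F$-localization) supplies the details the paper leaves to the reader.
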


\begin{Proof} (1) If $s\in S$, then $sR\in\Cal F$, so $sQ=Q$. Thus $s$ is right invertible in~$Q$. But $Q$ is directly finite, so right invertible elements of $Q$ are invertible in $Q$. In particular, $s$ is regular in $R$.

(2) follows immediately from (1).
\end{Proof}

Proposition~\ref{xyk} will be later applied in particular to the case in which $Q$ is right (or left) perfect, hence semilocal, hence directly finite. 

\begin{corollary}\label{3.3} Suppose that the ring $Q$ is directly finite. Then for every torsion right  $R$-module $M_R$, the canonical mapping $$\pi\colon \Hom({}_RK_R, M_R)\otimes_RK \to h(M_R),$$ defined by $\pi(f\otimes x)=f(x)$ for every $f\in \Hom(K_R,M_R)$, is a right $R$-module isomorphism.\end{corollary}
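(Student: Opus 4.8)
The plan is to get surjectivity for free from Corollary~\ref{Zahra} (which applies here, since $M_R$ is torsion) and to prove injectivity by hand, using the explicit description of $Q$ provided by Proposition~\ref{xyk}. Because $Q$ is directly finite, Proposition~\ref{xyk} gives $Q=R[S^{-1}]$, with every $s\in S$ a regular element of $R$ and, decisively, a \emph{unit} of $Q$. For $s\in S$ I will write $\overline{s^{-1}}:=s^{-1}+R\in K$.

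First I would bring an arbitrary element of the tensor product into a normal form. Suppose $\xi\in\Hom({}_RK_R,M_R)\otimes_R K$ satisfies $\pi(\xi)=0$, and write $\xi=\sum_{i=1}^n f_i\otimes x_i$. Lifting each $x_i$ to $Q=R[S^{-1}]$, we have $x_i=\overline{a_is_i^{-1}}$ for some $a_i\in R$, $s_i\in S$. Since $S$ is multiplicatively closed and satisfies property~(2), an easy induction produces a common right multiple $s\in S$ of the $s_i$, say $s=s_ib_i$ with $b_i\in R$; then $s_i^{-1}=b_is^{-1}$ in $Q$, hence $x_i=\overline{a_ib_is^{-1}}=(a_ib_i)\cdot\overline{s^{-1}}$, where $\cdot$ denotes the left $R$-action on $K$. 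Moving these scalars across the tensor, $\xi=\big(\sum_i f_i\cdot(a_ib_i)\big)\otimes\overline{s^{-1}}=g\otimes\overline{s^{-1}}$ for a single $g\in\Hom({}_RK_R,M_R)$.

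Next I would exploit $\pi(\xi)=g(\overline{s^{-1}})=0$: since $g$ is right $R$-linear, it vanishes on the cyclic right submodule $\overline{s^{-1}}R=\{\,\overline{s^{-1}r}\mid r\in R\,\}$ of $K$. Consider left multiplication by $s$ on $K$, $\mu_s\colon K_R\to K_R$, $\mu_s(\bar q)=\overline{sq}$; this is well defined because $s\in R$, and it is right $R$-linear because left and right multiplications commute. As $s$ is a unit of $Q$ we have $sQ=Q$, so $\mu_s$ is onto; and $\ker\mu_s=\{\,\bar q\mid sq\in R\,\}=\{\,\overline{s^{-1}r}\mid r\in R\,\}=\overline{s^{-1}}R$. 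Thus $\ker\mu_s\subseteq\ker g$, so $g$ factors through $\mu_s$; that is, there is $\psi\in\Hom({}_RK_R,M_R)$ with $g=\psi\cdot s$ (right $R$-action). Hence, by the defining relation of $\otimes_R$,
\[
\xi=g\otimes\overline{s^{-1}}=(\psi\cdot s)\otimes\overline{s^{-1}}=\psi\otimes\big(s\cdot\overline{s^{-1}}\big)=\psi\otimes\overline 1=\psi\otimes 0=0,
\]
so $\pi$ is injective, hence an isomorphism.

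The hard part will be pinning down $\ker\mu_s=\overline{s^{-1}}R$: this is exactly where direct finiteness of $Q$ is genuinely needed (via Proposition~\ref{xyk}), for without $s$ being an honest unit of $Q$ one cannot invert the relation $sq\in R$ and the kernel of $\mu_s$ need not even be cyclic. The common-denominator step is routine but also needs a little care, since $S$ is only a right denominator set, so elements of $Q$ are right fractions $as^{-1}$; the whole argument would not get off the ground without the normalization $\xi=g\otimes\overline{s^{-1}}$ obtained in the second paragraph.
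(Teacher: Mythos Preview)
Your proof is correct and follows essentially the same route as the paper's: surjectivity via Corollary~\ref{Zahra}, reduction of an arbitrary tensor to a single simple tensor $g\otimes(s^{-1}+R)$ by a common-denominator argument, and then factoring $g$ through left multiplication by $s$ to conclude $g=\psi s$ and hence $g\otimes(s^{-1}+R)=\psi\otimes 0=0$. The only cosmetic difference is that the paper carries out the factorization by lifting to $Q$ via the projection $p\colon Q\to K$ and composing with left multiplication by $s^{-1}$ on $Q$, whereas you work directly with the surjection $\mu_s\colon K\to K$ and identify its kernel as $\overline{s^{-1}}R$; the two descriptions are equivalent.
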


\begin{Proof} We saw in Corollary~\ref{Zahra} that $\pi$ is surjective. As far as injectivity is concerned, notice that
every element of $\Hom({}_RK_R, M_R)\otimes_RK$ can be written in the form $$\sum_{i=1}^n f_i\otimes (q_i+R).$$ Now $q_i=r_is^{-1}$ for a suitable $s\in S$ \cite[Lemma~10.2(a)]{anintroductionGoodearl}, so that every element of $\Hom({}_RK_R, M_R)\otimes_RK$ can be written in the form $f\otimes (s^{-1}+R)$. Suppose that $f\otimes (s^{-1}+R)\in\ker\pi$, that is, $f(s^{-1}+R)=0$. Let $p \colon Q\to K$ denote the canonical projection, so that $f p \colon Q_R\to M_R$ is a morphism whose kernel contains $s^{-1}$. Compose this with left multiplication by $s^{-1}$
$$\lambda\colon Q_R\to Q_R,$$ which is an automorphism, getting a morphism  $f p \lambda\colon Q_R\to M_R$ whose kernel contains $1$. Thus $f p \lambda$ factors through a suitable morphism $
g\colon K_R\to M_R$, so that $f p \lambda=gp $. If $\lambda'\colon Q_R\to Q_R$ is left multiplication by $s$, then $f p =gp \lambda'$, that is, $f(x+R)=g(sx+R)$ for every $x\in Q$. This proves that $f=gs$. Then $f\otimes (s^{-1}+R)=gs\otimes (s^{-1}+R)=g\otimes 0=0$. Therefore $\pi$ is also injective.
\end{Proof}

Let $M_R$ be a right $R$-module. For every element $x\in M_R$, there is a right $R$-module morphism $R_R\to M_R$, $1\mapsto x$. Tensoring with $_RK$, we get a right $R$-module morphism $\lambda_x\colon K_R\to M\otimes_RK$, defined by $\lambda_x(k)=x\otimes k$.  The mapping $\lambda\colon M_R\to\Hom(K_R, M\otimes_RK)$, defined by $\lambda(x)=\lambda_x$ for every $x\in M_R$, is a right $R$-module morphism, as is easily checked. Here the right $R$-module structure on $\Hom(K_R, M\otimes_RK)$ is given by the multiplication defined, for every $f\in \Hom(K_R, M\otimes_RK)$ and $r\in R$, by $(fr)(k)=f(rk)$ for all $k\in K$.

\begin{theorem} Suppose $Q$ directly finite. Let $M_R$ be an $h$-reduced torsion-free right $R$-module. Then the canonical mapping $\lambda\colon M_R\to\Hom(K_R, M\otimes_RK)$ is injective and its cokernel is isomorphic to $\Ext_R^1(Q_R,M_R)$.\end{theorem}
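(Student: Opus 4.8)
The plan is to build the exact sequence (\ref{c}) of Theorem~\ref{2.2}(2), but in its right-module form (legitimate, since we are now in Section~\ref{2} where the corresponding one-sided statements hold for right modules), and to identify the map $\lambda$ with the canonical map ${}_RB/h({}_RB)\to\Ext^1_R(K_R,B_R)$ appearing there. More precisely, I would first argue that, for an $h$-reduced torsion-free module $M_R$, there is an exact sequence
\[
0\to M_R\to\Ext^1_R(K_R,M_R)\to\Ext^1_R(Q_R,M_R)\to0,
\]
obtained by applying $\Hom(-,M_R)$ to the bimodule sequence $0\to R\to Q\to K\to0$ and using $\Hom(Q_R,M_R)=h(M_R)=0$ (here $h$-reducedness is exactly what kills $\Hom(Q_R,M_R)$). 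So it suffices to produce a natural isomorphism $\Hom(K_R,M\otimes_RK)\cong\Ext^1_R(K_R,M_R)$ under which the connecting map of that sequence corresponds to $\lambda$.

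Next, I would establish that isomorphism. Apply $\Hom(K_R,-)$ to the short exact sequence $0\to M_R/t(M_R)\to M\otimes_RQ\to M\otimes_RK\to0$ of (\ref{a}) from Theorem~\ref{2.2}(1); since $M_R$ is torsion-free, $M_R/t(M_R)=M_R$. Because $M\otimes_RQ$ is a right $Q$-module, $\Hom(K_R,M\otimes_RQ)=0$ by property~(7) of Section~\ref{0}, and also $\Ext^1_R(K_R,M\otimes_RQ)\cong\Hom_R(\Tor_1^R(Q_R,{}_RK),M\otimes_RQ)$ — but we need to be a little careful here. The clean route is instead: by Corollary~\ref{3.3}, $\Hom(K_R,M\otimes_RK)\otimes_RK\cong h(M\otimes_RK)=M\otimes_RK$ (the last equality because $M\otimes_RK$ is torsion $h$-divisible by Corollary~\ref{pjb}, hence equals its own $h$-divisible part), which signals that $\Hom(K_R,M\otimes_RK)$ is the right object; and then I would compute $\Ext^1_R(K_R,M_R)$ directly from the long exact sequence for $\Hom(K_R,-)$ applied to $0\to M_R\to M\otimes_RQ\to M\otimes_RK\to0$, getting
\[
0\to\Hom(K_R,M\otimes_RK)\to\Ext^1_R(K_R,M_R)\to\Ext^1_R(K_R,M\otimes_RQ),
\]
together with the surjection $\Hom(K_R,M\otimes_RK)\to\Ext^1_R(K_R,M_R)$ being forced once one checks the preceding term vanishes. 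The vanishing of $\Ext^1_R(K_R,M\otimes_RQ)$: since $M\otimes_RQ$ is a $Q$-module it is Matlis-cotorsion, but we need $\Ext^1_R(K_R,-)$, not $\Ext^1_R(Q_R,-)$; one gets this from property~(5) together with the fact that $K$ has weak dimension $\le1$ is not quite enough either. I would instead use that $M\otimes_RQ$ is a $Q$-module and $K\otimes_RQ=0$, $\Tor_1^R(K_R,{}_RQ)=0$ by (10), plus the adjunction $\Ext^1_R(K_R,\Hom(Q_R,-))$-style argument as in the proof of Theorem~\ref{1.6}, to conclude $\Ext^1_R(K_R,M\otimes_RQ)=0$.

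Finally I would pin down that the composite isomorphism $\Hom(K_R,M\otimes_RK)\xrightarrow{\ \sim\ }\Ext^1_R(K_R,M_R)$ carries $\lambda$ to the connecting homomorphism $\delta\colon M_R=\Hom(R_R,M_R)\to\Ext^1_R(K_R,M_R)$: both are defined by the same recipe, sending $x\in M$ to the pullback of $0\to M_R\to M\otimes_RQ\to M\otimes_RK\to0$ along $\lambda_x\colon K_R\to M\otimes_RK$, $k\mapsto x\otimes k$. Since $\delta$ is injective with cokernel $\Ext^1_R(Q_R,M_R)$ by the long exact sequence (using $h$-reducedness and $\Hom(K_R,M_R)$ torsion-free — here the hypothesis that $M$ is $h$-reduced and torsion-free ensures the relevant $\Hom$ terms vanish so the cokernel is exactly $\Ext^1_R(Q_R,M_R)$), the theorem follows.

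The main obstacle I anticipate is the bookkeeping of the vanishing $\Ext^1_R(K_R,M\otimes_RQ)=0$ and $\Hom(K_R,M\otimes_RQ)=0$ on the right side, i.e.\ transporting the left-module machinery of Theorem~\ref{2.2} and the computations behind Theorem~\ref{1.6} to the right-module setting of Section~\ref{2}, where the hypotheses are no longer left/right symmetric. One must check that $S$ being a right denominator set (Proposition~\ref{xyk}, using $Q$ directly finite) gives enough to run the localization arguments for right modules; the identification of $\lambda$ with the connecting map is, by contrast, a routine diagram chase once the identifications are in place.
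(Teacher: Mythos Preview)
Your approach is correct and genuinely different from the paper's. The paper proceeds by a seven-step element-wise argument: it first establishes (Steps~1--3) normal forms $x\otimes(s^{-1}+R)$ for elements of $M\otimes_RK$ using the Ore denominator calculus available once $Q$ is directly finite (Proposition~\ref{xyk}); then proves $\lambda$ injective by showing $\ker\lambda$ is torsion-free and divisible, hence a $Q$-module, hence zero inside the $h$-reduced $M$ (Step~4); then shows the cokernel $C$ is torsion-free and divisible by further element computations (Steps~5--6); and finally identifies $C\cong\Hom(Q_R,C_R)\cong\Ext^1_R(Q_R,M_R)$ by applying $\Hom(Q_R,-)$ to $0\to M\to\Hom(K,M\otimes K)\to C\to 0$ and invoking Theorem~\ref{1.6} (Step~7). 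Your route is purely homological: two long exact sequences and a pushout--pullback identification of connecting maps.

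One simplification you missed: you wrestle with $\Ext^1_R(K_R,M\otimes_RQ)=0$, but this, together with $\Hom(K_R,M\otimes_RQ)=0$, is immediate from property~(7) of Section~\ref{0}, since $M\otimes_RQ$ is a right $Q$-module. With that in hand your connecting map $\partial\colon\Hom(K,M\otimes K)\to\Ext^1(K,M)$ is an isomorphism, and the standard factorization of a morphism of short exact sequences (the map of sequences induced by $r\mapsto xr$, $q\mapsto x\otimes q$, $k\mapsto x\otimes k$) through its common pushout/pullback gives $\delta=\partial\circ\lambda$ exactly as you claim. A pleasant byproduct: your argument never touches Ore denominators, so it appears not to use the hypothesis that $Q$ be directly finite at all --- only $_RQ$ flat (for sequence~(\ref{a})) and the bimorphism condition (for property~(7)). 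The paper's element-wise proof, by contrast, makes essential use of the denominator description of $Q$ throughout Steps~1--6, which is why the hypothesis is stated.
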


\begin{Proof} The proof is organized in seven steps.

\smallskip

{\em Step 1: Every element of $M\otimes_RK$ can be written in the form $x\otimes (s^{-1}+R)$ for suitable $x\in M_R$ and $s\in S$.}

Any element of $M\otimes_RK$ is of  the form $\sum_{i=1}^nx_i\otimes (r_is_i^{-1}+R)$. Reducing to the same denominator \cite[Lemma~4.21]{anintroductionGoodearl}, we find elements $r'_i\in R$ and $s\in S$ such that $s_ir'_i=s$ for every $i$. Multiplying by $s^{-1}$ on the right and by $s_i^{-1}$ on the left, we get that $r'_is^{-1}=s_i^{-1}$. Thus $\sum_{i=1}^nx_i\otimes (r_is_i^{-1}+R)=\sum_{i=1}^nx_i\otimes (r_ir'_is^{-1}+R)=\left(\sum_{i=1}^nx_ir_ir'_i\right)\otimes (s^{-1}+R)$ is of the form $x\otimes (s^{-1}+R)$.

\smallskip

{\em Step 2: Let $s$ be an element of $S$. The elements $y$ of $M\otimes_RK$ such that $ys=0$ are those that can be written in the form $x\otimes (s^{-1}+R)$ for a suitable $x\in M_R$.}

Clearly, $\left(x\otimes (s^{-1}+R)\right)s=0$. Conversely, let $y$ be an element of $M\otimes_RK$ such that $ys=0$. By Step 1, we have that $y=z\otimes (t^{-1}+R)$ for suitable elements $z\in M_R$ and $t\in S$. Taking the same denominator again, we get $a,b\in R$ and $u\in S$ with $sa=u$ and $tb=u$, so that $au^{-1}=s^{-1}$ and $bu^{-1}=t^{-1}$ in $Q$. Hence $y=z\otimes (t^{-1}+R)=zb\otimes (u^{-1}+R)$. From the short exact sequence (\ref{a}) in Theorem~\ref{2.2}, we see that the condition $ys=0$ implies that \begin{equation}zb\otimes (u^{-1}s)=x\otimes 1\label{**}\end{equation} in $M\otimes_RQ$ for some $x\in M_R$. Now $au^{-1}=s^{-1}$, so $au^{-1}s=1$. As $Q$ is directly finite, one-sided inverses are two-sided inverses, hence 
$u^{-1}sa=1$. Thus, multiplying (\ref{**}) by $a$ on the right, we get that $zb\otimes 1=x\otimes a$, from which $zb-xa=0$. Thus $y=zb\otimes (u^{-1}+R)=xa\otimes (u^{-1}+R)=x\otimes (au^{-1}+R)=x\otimes (s^{-1}+R)$, as desired.

\smallskip

{\em Step 3: If $x\in M_R$, $r\in R$ and $s\in S$, then $x\otimes(rs^{-1}+R)=0$ in $M\otimes_RK$ if and only if $xr\in Ms$.}

From the short exact sequence (d) in Theorem~\ref{2.2}, we see that $x\otimes(rs^{-1}+R)=0$ in $M\otimes_RK$ if and only if there exists $y\in M_R$ such that $x\otimes(rs^{-1})=y\otimes 1$ in $M\otimes_RQ$, if and only if $x\otimes r=y\otimes s$, if and only if $xr-ys=0$. That is, $x\otimes(rs^{-1}+R)=0$ in $M\otimes_RK$ if and only if there exists $y\in M_R$ with $xr=ys$, that is, if and only if $xr\in Ms$.

\smallskip

{\em Step 4: $\lambda$ is injective.}

The submodule $\ker\lambda$ of $M_R$ is torsion-free because it is a submodule of the torsion-free module $M_R$. Let us show that $\ker\lambda$ is also divisible. Let $x$ be an element of $\ker\lambda$ and $s\in S$. Then $\lambda_x=0$, so that $x\otimes k=0$ for every $k\in K$. It follows that $x\otimes(rt^{-1}+R)=0$ in $M\otimes_RK$ for every $r\in R$ and $t\in S$. By Step 3, $xr\in Mt$ for every $r\in R$ and $t\in S$. In particular, $x\in Ms$, so that $x=ys$ for some $y\in M_R$. In order to conclude, it suffices to show that $y\in\ker\lambda$, that is, that $\lambda_y=0$, equivalently that $y\otimes K=0$ in $M\otimes_RK$. But $y\otimes K=y\otimes sK=ys\otimes K=x\otimes K=0$. Thus $y\in\ker\lambda$. This proves that $\lambda_y=0$, so that $\ker\lambda$ is a divisible submodule of $M_R$. As $\ker\lambda$ is both torsion-free and divisible, right multiplication by any element $s\in S$ is an automorphism of the abelian group $\ker\lambda$. Thus $\ker\lambda$ has a unique right $Q$-module structure that extends the right $R$-module structure. In particular, $\ker\lambda$ is $h$-divisible. But $M_R$ is $h$-reduced, so that $\ker\lambda=0$.

\smallskip

Thus we have a short exact sequence \begin{equation}0\rightarrow
 M_R \stackrel{\lambda}{\longrightarrow} \Hom(K_R, M\otimes_RK)\rightarrow C_R\rightarrow 0,\label{e}\tag{e}\end{equation} where $C_R$ denotes the cokernel of $\lambda$. 
 
 \smallskip

 {\em Step 5: $C_R$ is torsion-free.}
 
 Suppose $f\in  \Hom(K_R, M\otimes_RK)$, $s\in S$ and $fs\in\lambda(M_R)$. We must prove that $f\in\lambda(M_R)$. Now $fs\in\lambda(M_R)$ implies that there exists $x\in M_R$ with $fs=\lambda_x$, that is $f(sk)=x\otimes k$ for every $k\in K$. In particular, $0=f(1_Q+R)=f(s(s^{-1}+R))=x\otimes(s^{-1}+R)$. By Step 3, we get that $x\in Ms$. Hence there exists $y\in M_R$ with $x=ys$. It follows that $f(sk)=x\otimes k=ys\otimes k=y\otimes sk$. As $_RK$ is divisible, we get that $f(k)=y\otimes k$ for every $k\in K$, i.e., $f=\lambda_y\in \lambda(M_R)$, as desired.
 
  \smallskip

 {\em Step 6: $C_R$ is divisible.}
 
Assume that $f\in  \Hom(K_R, M\otimes_RK)$ and $s\in S$. We must prove that there exist $g\in  \Hom(K_R, M\otimes_RK)$ and $x\in M_R$ such that $f=gs+\lambda_x$. Now $f\in  \Hom(K_R, M\otimes_RK)$ and $s\in S$ imply that $f(s^{-1}+R)$ is an element of $M\otimes_RK$ such that $(f(s^{-1}+R))s=0$. By Step 2, $f(s^{-1}+R)=x\otimes (s^{-1}+R)$ for a suitable $x\in M_R$. Thus $(f-\lambda_x)(s^{-1}+R)=0$. It follows that if $\pi\colon K_R=Q/R\to Q/s^{-1}R$ denotes the canonical projection, there exists a morphism $\overline{g}\colon Q/s^{-1}R\to M\otimes_RK$ such that $f-\lambda_x=\overline{g}\pi$. Similarly, if $\ell_s\colon K_R\to K_R$ denotes the right $R$-module morphism $\ell_s\colon k\mapsto sk$, there exists an isomorphism $\overline{\ell_s}\colon Q/s^{-1}R\to Q/R=K_R$ such that $\ell_s=\overline{\ell_s}\pi$. Set $g:=\overline g\circ(\overline{\ell_s})^{-1}$, so that $g\colon K_R\to M\otimes_RK$. Then $gs+\lambda_x=g\circ\ell_s+\lambda_x=\overline g\circ(\overline{\ell_s})^{-1}\circ\overline{\ell_s}\circ\pi+\lambda_x=\overline g\circ\pi+\lambda_x=f$, as desired.
 
 \smallskip

 {\em Step 7: $C_R\cong \Ext_R^1(Q_R,M_R)$.}
 
 Apply the functor $\Hom(Q_R,-)$ to the short exact sequence (\ref{e}), getting an exact sequence \begin{equation}\begin{array}{l}\Hom(Q_R, \Hom(K_R, M\otimes_RK))\rightarrow \Hom(Q_R, C_R)\rightarrow \\ \qquad\qquad\rightarrow\Ext^1_R(Q_R, M_R) \rightarrow \Ext^1_R(Q_R, \Hom(K_R, M\otimes_RK)).\end{array}\label{f}\tag{f}\end{equation} The right $R$-module $ \Hom(K_R, M\otimes_RK)$ is Matlis-cotorsion and $h$-reduced by Theorem~\ref{1.6}, so that the first and the last module in the exact sequence (\ref{f}) are zero. It follows that $\Hom(Q_R, C_R)\cong \Ext^1_R(Q_R, M_R) $. Now $C_R$ is torsion-free and divisible (Steps 6 and 7), so that right multiplication by any element of $S$ is an automorphism of the abelian group $C$. It follows that $C$ has a unique right $Q$-module structure that extends the right $R$-module structure on $C_R$. In particular $C_Q\cong \Hom(Q_R, C_R)$ by (7). It follows that $C_R\cong \Hom(Q_R, C_R)\cong \Ext^1_R(Q_R, M_R) $, which concludes the proof of the Theorem.
\end{Proof}

\begin{corollary}\label{3.5} Suppose $Q$ directly finite. Let $M_R$ be an $h$-reduced torsion-free Matlis-cotorsion right $R$-module. Then the canonical mapping $$\lambda\colon M_R\to\Hom(K_R, M\otimes_RK)$$ is an isomorphism.\end{corollary}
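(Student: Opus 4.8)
The plan is to deduce this immediately from the preceding theorem, which already does all the work. That theorem establishes, for any $h$-reduced torsion-free right $R$-module $M_R$, that the canonical map $\lambda\colon M_R\to\Hom(K_R, M\otimes_RK)$ is injective and fits into a short exact sequence
\[
0\to M_R\stackrel{\lambda}{\longrightarrow}\Hom(K_R, M\otimes_RK)\to C_R\to 0
\]
with $C_R\cong\Ext^1_R(Q_R,M_R)$. So the only thing left to observe is that the extra hypothesis ``$M_R$ is Matlis-cotorsion'' forces this cokernel to vanish.

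Concretely, first I would recall the definition: $M_R$ Matlis-cotorsion means precisely $\Ext^1_R(Q_R,M_R)=0$. Combined with the identification $C_R\cong\Ext^1_R(Q_R,M_R)$ from the theorem, this gives $C_R=0$, so $\lambda$ is surjective. Since $\lambda$ is already injective (again by the theorem, or directly by Steps 1--4 there, which only used that $M_R$ is $h$-reduced and torsion-free), $\lambda$ is an isomorphism. There is no real obstacle here; the content is entirely in the seven-step theorem, and the corollary is just the specialization of that statement to the Matlis-cotorsion case. One could alternatively phrase the argument without the exact sequence at all: surjectivity of $\lambda$ is exactly Steps 5--7 read backwards once $\Ext^1_R(Q_R,M_R)=0$, but invoking the theorem verbatim is cleanest.
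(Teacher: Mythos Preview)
Your proposal is correct and matches the paper's approach exactly: the corollary is stated without proof in the paper, being an immediate consequence of the preceding theorem together with the definition of Matlis-cotorsion.
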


Thus we have generalized to our setting the Matlis category equivalence \cite[Corollary 2.4]{22}:

\begin{theorem}\label{main1} Suppose $Q$ directly finite. Then there is an equivalence of the category $\Cal C$ of $h$-reduced torsion-free
Matlis-cotorsion right $R$-modules with the category $\Cal T$ of $h$-divisible torsion right $R$-modules, given by $$-\otimes_RK\colon\Cal C\to\Cal T\qquad\mbox{\rm and}\qquad \Hom(K_R,-)\colon\Cal T\to~\Cal C.$$\end{theorem}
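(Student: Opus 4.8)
The plan is to check that the two stated functors are well defined between the indicated subcategories, that they are functorial on morphisms, and that they are mutually quasi-inverse, with the unit and counit isomorphisms provided by the canonical maps $\lambda$ and $\pi$ already analysed in this section. Almost all the substance has in fact been done: it remains only to assemble Corollary~\ref{pjb}, Theorem~\ref{1.6}, Corollary~\ref{3.3} and Corollary~\ref{3.5}.

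First I would verify that $-\otimes_R K$ maps $\Cal C$ into $\Cal T$ and that $\Hom(K_R,-)$ maps $\Cal T$ into $\Cal C$. For the former, Corollary~\ref{pjb} says that $M\otimes_R K$ is torsion and $h$-divisible for \emph{every} right $R$-module $M_R$, hence in particular for $M_R\in\Cal C$, so $-\otimes_R K$ indeed lands in $\Cal T$. For the latter, the proposition stating that $\Hom(K_R,M_R)$ is torsion-free for every right $R$-module $M_R$, combined with Theorem~\ref{1.6}, which gives that $\Hom({}_RK_R,M_R)$ is Matlis-cotorsion and $h$-reduced for every $M_R$, shows that $\Hom(K_R,N)\in\Cal C$ for every right $R$-module $N$, in particular for $N\in\Cal T$. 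Functoriality is the obvious one: tensoring morphisms with ${}_RK$, respectively applying $\Hom(K_R,-)$ to morphisms.

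Next I would produce the two natural isomorphisms. For $M_R\in\Cal C$ (so $M_R$ is $h$-reduced, torsion-free and Matlis-cotorsion), Corollary~\ref{3.5}, which uses that $Q$ is directly finite, tells us that the canonical morphism $\lambda\colon M_R\to\Hom(K_R,M\otimes_R K)$ is an isomorphism; this yields $\Hom(K_R,-)\circ(-\otimes_R K)\cong\mathrm{id}_{\Cal C}$, and incidentally shows $\Hom(K_R,M\otimes_R K)$ is again in $\Cal C$ without a separate check. For $N_R\in\Cal T$, the module $N_R$ is torsion and $h$-divisible, so $h(N_R)=N_R$, and Corollary~\ref{3.3} (again using direct finiteness of $Q$) says that the canonical evaluation map $\pi\colon\Hom({}_RK_R,N_R)\otimes_R K\to h(N_R)=N_R$, $f\otimes x\mapsto f(x)$, is an isomorphism; this yields $(-\otimes_R K)\circ\Hom(K_R,-)\cong\mathrm{id}_{\Cal T}$.

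Finally I would check that $\lambda$ and $\pi$ are genuinely natural, which is a routine diagram chase: for $f\colon M\to M'$ in $\Cal C$ one sees that $\lambda_{M'}\circ f$ and $\Hom(K_R,f\otimes\mathrm{id}_K)\circ\lambda_M$ both send $x\in M$ to the morphism $k\mapsto f(x)\otimes k$, and for $g\colon N\to N'$ in $\Cal T$ one sees that both relevant composites send $f\otimes x$ to $g(f(x))$. There is no serious obstacle here; the only point requiring any care is bookkeeping the three conditions that define $\Cal C$, and even the a priori awkward verification that $\Hom(K_R,M\otimes_R K)$ lies in $\Cal C$ is bypassed, since $\lambda$ exhibits it as isomorphic to $M\in\Cal C$.
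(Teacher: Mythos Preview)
Your proposal is correct and follows exactly the paper's approach: the paper's proof simply cites Corollaries~\ref{3.3} and~\ref{3.5}, and you have spelled out in full the details implicit in that citation (well-definedness of the functors via Corollary~\ref{pjb}, Theorem~\ref{1.6} and the torsion-freeness proposition, plus the routine naturality check).
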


\begin{Proof} Corollaries~\ref{3.3} and~\ref{3.5}.\end{Proof}

\section{Left and right flat bimorphisms, and $1$-topologies}\label{4}

In this section, {\em $R$ and $Q$ are rings, $\varphi \colon R\to Q$ is a bimorphism in the category of rings, and the $R$-$R$-bimodule ${}_RQ_R$ is a flat both as a left $R$-module and as a right $R$-module.} Therefore $\varphi\colon R\to Q$ is the canonical homomorphism of $R$ both into its right localization $R_{\Cal F}$, where $\Cal F$ is the right Gabriel topology $\Cal \{\,I\mid I$ is a right ideal of $R$ and $\varphi(I)Q=Q\,\}$
and into its
 left localization $R_{\Cal G}$, where $\Cal G=\{\,J\mid J$ is a left ideal of $R$ and $Q\varphi(J)=Q\,\}$ is a Gabriel topology consisting of dense left ideals and with a basis consisting of finitely generated left ideals. In order to apply the results of Section~\ref{2}, we will {\em suppose that $\Cal F$ and $\Cal G$ are 1-topologies and that $\Fdim(Q_Q) = 0$.} In particular, $Q$ is right perfect, and so directly finite. 
Correspondingly to $\Cal F$ and $\Cal G$, we have the two sets $S:=\{\, s\in R\mid sQ=Q\,\}$ and $T:=\{\, t\in R\mid Qt=Q\,\}$, so that $S$ (resp.~$T$) consists of all the elements of $R$ that are right invertible (resp.~left invertible) in $Q$. But $Q$ is directly finite, which implies that $S=T$ consists of regular elements of $R$ and $Q=R[S^{-1}]=[S^{-1}]R$. Moreover, as the elements of $S$ are regular, the ring $Q$ is contained in the classical right ring of quotients $R[S^{-1}_{\reg}]$ of $R$, where $S_{\reg}$, denotes the set of all regular elements of $R$ \cite[p.~52]{Stenstrom}. Since every element $s\in S_{\reg}$ is invertible in the directly finite ring $Q$, it follows that $Q=R[S^{-1}_{\reg}]$. Similarly, $Q=[S^{-1}_{\reg}]R$, and $S=S_{\reg}$.
 
 Thus the situation now is the following. We have, in this section, {\em a ring $R$ for which the set $S$ of all its regular elements is both a right denominator set and a left denominator set (right and left Ore ring), and we assume that $Q=R[S^{-1}]=[S^{-1}]R$ (the classical right and left ring of  quotients of $R$) and that $\Fdim(Q_Q)=0$.}
 
 \begin{remark}\label{right, but not left}{\rm Notice that if $R$ is a right Ore domain that is not left Ore, then the right field of quotients $Q$ of $R$ is flat as a left $R$ module, but is not flat as a right $R$-module \cite[paragraph after the proof of Proposition~0.8.6]{CohnFIRandLoc}. Thus the results of the previous section apply to this extension $Q$ of $R$, but the results in this section do not.}\end{remark}
 
We are now finally ready to prove, in the noncommutative case, the result, due to Fuchs and Salce in the commutative case, which we mentioned in the Introduction. In  \cite{FS}, Fuchs and Salce proved the equivalence of the nine equivalent conditions listed in the Introduction for modules over commutative rings $R$ with perfect quotient ring $Q$. Now we prove that the equivalence of seven of their conditions also holds for noncommutative right and left Ore rings $R$ for which  $\Fdim(Q_Q) = 0$. Here
  $Q = R[S^{-1}] = [S^{-1}] R$, where $S$ is the set of all regular elements of $R$. Notice that a commutative ring $Q$ is perfect if and only if $\Fdim(Q_Q) = 0$ \cite[pp.~466--468]{Bass}. 
  
 \begin{theorem}\label{7.1} Assume that $R$ is a right and left Ore ring and  $\Fdim(Q_Q) = 0$, where
  $Q = R[S^{-1}] = [S^{-1}] R$. Then 
the following conditions are equivalent:
 
(i) Flat right
$R$-modules are strongly flat.
 
 (ii) Matlis-cotorsion right $R$-modules are Enochs-cotorsion.

(iii) $h$-divisible right $R$-modules are weak-injective.
 
(iv) Homomorphic images of weak-injective right $R$-modules are weak-injective.
  
(v) Homomorphic images of injective right $R$-modules are weak-injective.

(vi) Right $R$-modules of $\wed \le 1$ are of $\pd\le1$.
 
(vii) The cotorsion pairs $(\Cal P_1,\Cal D)$ and $(\Cal F_1,\Cal W\Cal I)$ coincide.
 
 (viii) Divisible right $R$-modules are weak-injective.
 
\end{theorem}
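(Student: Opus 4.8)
The plan is to prove the eight conditions equivalent by isolating a small core of soft cotorsion-pair manipulations together with one genuinely hard implication, where the hypothesis $\Fdim(Q_Q)=0$ does the real work; the remaining implications then form an easy cycle.

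First I would record the formal input. The classes involved fit into cotorsion pairs $(\Cal P_1,\Cal D)$, $(\Cal F_1,\Cal W\Cal I)$, $(\Cal{SF},\Cal{MC})$ and $(\Cal F,\Cal{EC})$ --- for the second, $\Cal W\Cal I=\Cal F_1^{\bot}$ by definition and $\Cal F_1={}^{\bot}\Cal W\Cal I$ because $\Cal F_1$ is the left-hand class of the cotorsion pair generated by the set of finitely presented right $R$-modules of weak dimension $\le1$ --- and they satisfy $\Cal{SF}\subseteq\Cal F\subseteq\Cal F_1$, $\Cal P_1\subseteq\Cal F_1$, $\Cal{EC}\subseteq\Cal{MC}$ (as $Q_R$ is flat) and $\Cal W\Cal I\subseteq\Cal{HD}\subseteq\Cal D$ (the right-module analogue of Lemma~\ref{3.6}, together with ``$h$-divisible $\Rightarrow$ divisible''). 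From these: $(i)\Leftrightarrow(ii)$ is the equality $\Cal F=\Cal{SF}$ versus $\Cal{MC}=\Cal{EC}$, each obtained from the other by applying $(-)^{\bot}$ or ${}^{\bot}(-)$; and $(vi)\Leftrightarrow(vii)\Leftrightarrow(viii)$, since $(vi)$ reads $\Cal F_1=\Cal P_1$, which forces $\Cal W\Cal I=\Cal F_1^{\bot}=\Cal P_1^{\bot}=\Cal D$ and hence $(vii)$; $(vii)$ trivially gives $\Cal D=\Cal W\Cal I$, i.e. $(viii)$; and $(viii)$ gives $\Cal D=\Cal W\Cal I$, whence $\Cal F_1={}^{\bot}\Cal W\Cal I={}^{\bot}\Cal D=\Cal P_1$, i.e. $(vi)$.

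Next I would run the easy cycle $(viii)\Rightarrow(iii)\Rightarrow(iv)\Rightarrow(v)\Rightarrow(vi)\Rightarrow(ii)$. Here $(viii)$ gives $\Cal D=\Cal{HD}=\Cal W\Cal I$, in particular $(iii)$; $(iii)$ says $\Cal{HD}=\Cal W\Cal I$, and since quotients of $h$-divisible modules are $h$-divisible, quotients of weak-injective modules are weak-injective, which is $(iv)$; $(iv)\Rightarrow(v)$ because injective modules are weak-injective; and $(v)\Rightarrow(vi)$ by the argument of Proposition~\ref{4.4}(i): given $N$ of weak dimension $\le1$ and any $M$, embed $M$ in an injective $E$, so $\Ext^2_R(N,M)\cong\Ext^1_R(N,E/M)$, and $E/M$ is weak-injective by $(v)$, whence $\Ext^1_R(N,E/M)=0$ as $N\in\Cal F_1$; since $M$ is arbitrary, $\pd_R N\le1$. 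Finally $(vi)\Rightarrow(ii)$: for $F_R$ flat (hence torsion-free) and $M_R$ Matlis-cotorsion, Theorem~\ref{2.2}(a) gives $0\to F\to F\otimes_RQ\to F\otimes_RK\to0$; the module $F\otimes_RQ$ is a flat, hence (as $Q$ is right perfect) projective, right $Q$-module, so $\Ext^1_R(F\otimes_RQ,M)$ is a summand of $\prod\Ext^1_R(Q,M)=0$, while $F\otimes_RK$ has weak dimension $\le1$, hence by $(vi)$ projective dimension $\le1$, so $\Ext^2_R(F\otimes_RK,M)=0$; the long exact $\Hom_R(-,M)$-sequence then yields $\Ext^1_R(F,M)=0$, i.e. $M\in\Cal{EC}$.

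At this point every implication is in place except one from the ``strongly flat'' side into the cycle, and this --- say $(ii)\Rightarrow(vi)$ --- is the hard part, and the only place where $\Fdim(Q_Q)=0$ does real work. The plan is as follows. Given $N_R$ of weak dimension $\le1$, base change along the flat ring map $R\to Q$ gives $\wed_Q(N\otimes_RQ)\le1$; since $Q$ is right perfect the first $Q$-syzygy of $N\otimes_RQ$ is flat, hence projective, so $\pd_Q(N\otimes_RQ)$ is finite, and therefore $0$ by $\Fdim(Q_Q)=0$: $N\otimes_RQ$ is a projective right $Q$-module. Now take a special $\Cal{SF}$-precover $0\to C\to S\to N\to0$ (the cotorsion pair $(\Cal{SF},\Cal{MC})$ is complete); since $S$ is flat and $\wed_R N\le1$ one checks $C$ is flat, hence strongly flat by $(ii)$, so $C\in\Cal{SF}\cap\Cal{MC}$; applying $-\otimes_RQ$ and chasing (using $\Tor_1^R(N,Q)=0$, that $C$ equals its own localization, and that $S$ is torsion-free) shows that $N$ is torsion-free and that $0\to C\to S\otimes_RQ\to N\otimes_RQ\to0$ splits as a sequence of projective $Q$-modules, so $C$ is a direct summand of the projective $Q$-module $S\otimes_RQ$. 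One then feeds this into the sequence $0\to N\to N\otimes_RQ\to N\otimes_RK\to0$ of Theorem~\ref{2.2}(a): $N\otimes_RQ$ is projective over $Q$ and $N\otimes_RK$ is torsion and $h$-divisible, and the remaining and decisive step --- once more leaning on $\Fdim(Q_Q)=0$, to identify $\Cal{SF}\cap\Cal{MC}$ and to bound the projective dimension of $N\otimes_RK$ --- produces $\pd_R N\le1$, i.e. $(vi)$. I expect this last structural step, controlling $\pd_R$ of $N\otimes_RK$ and of the objects of $\Cal{SF}\cap\Cal{MC}$, to be the main obstacle; everything else is routine homological algebra over the cotorsion pairs already set up, plus the Matlis-type equivalence of Theorem~\ref{main1} if one prefers to phrase the endgame through the categories $\Cal C$ and $\Cal T$.
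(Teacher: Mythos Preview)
Your overall cycle and the easy implications are fine and mostly match the paper, but there are two genuine gaps.

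\textbf{First gap: $(\Cal P_1,\Cal D)$ is not ``formal input''.} You assert at the outset that $(\Cal P_1,\Cal D)$ is a cotorsion pair and then use $\Cal P_1^{\bot}=\Cal D$ and ${}^{\bot}\Cal D=\Cal P_1$ freely in your $(vi)\Leftrightarrow(vii)\Leftrightarrow(viii)$ block. In the noncommutative setting of this section neither equality is automatic. In the paper this is established only under~$(vi)$, and the argument is not soft: one first proves $\Cal{HD}\subseteq\Cal P_1^{\bot}$ (here $\Fdim(Q_Q)=0$ is used to make $P\otimes_RQ$ projective for $P\in\Cal P_1$), combines it with ${}^{\bot}\Cal{HD}\subseteq\Cal P_1$ and with $\Cal P_1^{\bot}=\Cal F_1^{\bot}=\Cal{WI}\subseteq\Cal{HD}$ (which needs $(vi)$), and then invokes \cite[Corollary~4.14]{AS} to get $\Cal{HD}=\Cal D$ from $\pd(Q_R)\le 1$. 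Without this work your step ``$\Cal P_1^{\bot}=\Cal D$'' in $(vi)\Rightarrow(vii)$ is unjustified.

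\textbf{Second and main gap: the hard implication.} You correctly isolate that everything hinges on getting from $(ii)$ back into the cycle, but your sketch of $(ii)\Rightarrow(vi)$ does not work. The assertion ``$N$ is torsion-free'' is false for a general $N\in\Cal F_1$: take $N=K=Q/R$, which has weak dimension $\le1$ but is torsion. Likewise ``$C$ equals its own localization'' for $C\in\Cal{SF}\cap\Cal{MC}$ is not established (nothing you have said forces $C$ to be a $Q$-module), and you yourself flag the endgame as an unresolved ``main obstacle''. So this direction is genuinely missing.

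The paper closes the loop differently, via $(ii)\Rightarrow(iii)$, and the argument is short and complete. For $D_R$ $h$-divisible, the right-module version of Theorem~\ref{2.2}(b) gives
\[
0\longrightarrow \Hom(K,D)\longrightarrow \Hom(Q,D)\longrightarrow D\longrightarrow 0.
\]
For $M\in\Cal F_1$ one has $\Ext^1_R(M,\Hom(Q,D))\cong\Ext^1_Q(M\otimes_RQ,\Hom(Q,D))=0$ by~(18), because $M\otimes_RQ$ has finite projective dimension over the right perfect ring $Q$ and hence is projective by $\Fdim(Q_Q)=0$. And $\Ext^2_R(M,\Hom(K,D))=0$ because the first syzygy of $M$ is flat and $\Hom(K,D)$ is Matlis-cotorsion by Theorem~\ref{1.6}, hence Enochs-cotorsion by~$(ii)$. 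The long exact sequence then gives $\Ext^1_R(M,D)=0$. This is where the real content of $(ii)$ and of $\Fdim(Q_Q)=0$ is spent; your special $\Cal{SF}$-precover and the Matlis equivalence are not needed.
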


\begin{proof}
$(i) {}\Leftrightarrow{} (ii)$ is clear, because 
$({}_R\mathcal{SF} , {}_R{\mathcal{MC}})$ and  $ ({}_R\mathcal{F},  {}_R\mathcal{EC})$
are cotorsion pairs, ${}_R\mathcal{SF} \subseteq {}_R\mathcal{F}$ and  ${}_R{\mathcal{MC}}\supseteq {}_R{\mathcal{EC}}$. 

$(ii) {}\Rightarrow{} (iii)$ Let $D_R$ be an $h$-divisible module. By sequence (\ref{b}) of Theorem~\ref{2.2}, we have an exact sequence of right $R$-modules 
$$ 0 \to\Hom(K, D) \to\Hom(Q, D) \to  D \to 0$$
 Let $M_R$ be a module of weak dimension $\le1$. In order to prove $(iii)$, we must show that $\Ext^1_R(M, D) = 0.$ 
We have the exact sequence  \begin{equation}\Ext^1_R(M,\Hom(Q, D)) \to \Ext^1_R(M, D) \to \Ext^2(M,\Hom(K, D)).\label{ooo}\end{equation} Now 
$\Hom(Q, D)$ is a right $Q$-module and so $\Ext^1_R(M,\Hom(Q, D)) \cong\Ext^1_Q(M\otimes Q,\Hom(Q, D))$ by (18). The module $M_R$ has weak dimension $\le1$, and $_RQ$ is flat, so that the $Q$-module $M\otimes_RQ$ has weak dimension $\le1$. But $Q$ is perfect, so that the $Q$-module $M\otimes_RQ$ has projective dimension $\le1$. 
Since $\Fdim(Q_Q) = 0$, $M \otimes Q$ is projective, and so $\Ext^1_Q(M\otimes Q,\Hom(Q, D)) = 0$. By (18), the first $\Ext$ in the sequence (\ref{ooo}) is zero. 
On  the other hand, $M_R$ has weak dimension $\le1$, so that there exists an exact sequence $0 \to N_R \to P_R \to M_R \to 0 $, where $N_R$ is flat and $P_R$ is projective.  Applying to this exact sequence the functor $\Hom(-,\Hom(K, D))$, we get an exact sequence $\Ext^1_R(N,\Hom(K, D))\to \Ext^2_R(M,\Hom(K, D))\to \Ext^2_R(P,\Hom(K, D))$. The last module is zero because $P$ is projective, and the first module is also zero because
$\Hom(K, D)$ is Matlis-cotorsion by Theorem~\ref{1.6}, and so  Enochs-cotorsion by $(ii)$. This implies that 
 $\Ext^2_R(M,\Hom(K, D)) = 0$. From the exact sequence (\ref{ooo}), we get that $\Ext^1_R(M, D) = 0$, as desired.
 
$(iii) {}\Rightarrow{} (iv)$ follows from Lemma~\ref{3.6}, and $(iv) {}\Rightarrow{} (v)$ is trivial. 

$(v) {}\Rightarrow{} (vi)$. Let $M$ be a right $R$-module and $A$ be a right $R$-module of weak  dimension $\le1$.
We want to show that $\Ext^2(A, M) = 0$. Let $E$ be the injective hull of $M$ and consider the exact sequence 
$0 \to M \to E \to E/M \to 0$. We get the exact sequence 
$\Ext^1 (A, E/M) \to\Ext^2 (A, M) \to\Ext^2(A, E) $, where the first $\Ext^1$ is zero, because $E/M$ is weak-injective by $(v)$, and the last $\Ext^2$ is zero because $E$ is injective.  So $A$ is of projective dimension $\le1$. 
 
$(vi) {}\Rightarrow{} (vii)$.
First of all we show that $({\Cal P}_1, \mathcal{HD})$ is a cotorsion pair. 
Let $M$ be a right $Q$-module  and $P_R\in   {\Cal P}_1$. As the module $P_R$ has projective dimension $\le1$, and $_RQ$ is flat, the $Q$-module $P\otimes_RQ$ has projective dimension $\le1$. But $\Fdim(Q_Q) = 0$, so $P \otimes Q$ is projective, and thus, from (18), we get that $\Ext^1_R(P,M) \cong  \Ext^1_Q(P \otimes Q,M) = 0$. This shows that $\Ext^1_R(P,M) =0$ for every right $Q$-module $M$ and every $P_R\in   {\Cal P}_1$. If $N$ is an $h$-divisible $R$-module, there is an exact sequence $0\to K\to M\to N\to 0$ for some $Q$-module $M$. From this sequence, we get the exact sequence $\Ext^1(P,M)\to \Ext^1(P,N)\to \Ext^2(P,K)$. The first module is zero because $M$ is a $Q$-module, and the last module is zero because $P$ is in $\Cal P_1$. We have thus proved that $\Ext^1_R(P,N) = 0$ for every module $P$ in $\Cal P_1$ and every $h$-divisible $R$-module $N$. This proves that  $\Cal H\Cal D \subseteq {\Cal P}_1 ^\bot $ and 
${\Cal P}_1 \subseteq{} ^\bot \Cal H \Cal D$. We also know that $^\bot \Cal H\Cal D \subseteq {\Cal P}_1$ (Proposition~\ref{4.4}(i)) and that 
 ${\Cal P}_1^ \bot ={\Cal F}_1^ \bot = \mathcal{WI} \subseteq \mathcal{HD}$ (by $(vi)$ and Lemma~\ref{3.6}). Therefore 
$({\Cal P}_1, \mathcal{HD})$ is a cotorsion pair. 
 Now p.dim $(K_R) \leq 1$ by $(vi)$, so that the functor $\Ext^2(K,-)$ is zero. From the exact sequence of bimodules $ 0\rightarrow
R\rightarrow Q\rightarrow K\rightarrow 0$,  we get that $\Ext^2(Q,-)\cong \Ext^2(K,-)$, and so p.dim$(Q_R) \le 1$.
Therefore  $\mathcal{HD}_R = \mathcal{D}_R $ by  \cite[Corollary 4.14]{AS}.

$(vi) {}\Rightarrow{} (ii)$ Firstly, we will show that $Q^\bot$ is closed under homomorphic images. Let $M \in Q^\bot$ and $N$ be a submodule of $M$.
From the exact sequence $0 \to N \to M \to M/N \to 0$, we get the exact sequence $\Ext^1(Q, M) \to \Ext^1(Q, M/N) \to \Ext^2(Q, N)$.
The first $\Ext$ is zero because $M \in Q^\bot$, and the third $\Ext$ is also zero, because $Q$ is of projective dimension $\le1$ by $(vi)$. So $M/N \in Q^\bot$.
In order to prove $(ii)$, we must show that $\Ext^1_R(F, C) = 0$ for every $C \in Q^\bot $ and every flat right $R$-module $F$.
For any $h$-divisible module $H$, we have that $H\in\Cal D$, so that $H\in \Cal W\Cal I$ by $(vi){}\Rightarrow{}(vii)$.
Also, $F$, which is flat, belongs to $\Cal F_1$. Therefore $\Ext^1_R(F, H) = 0$. Thus we can assume that $C$ is not $h$-divisible. Consider the exact sequence
$0 \to h(C) \to C \to C/h(C) \to 0 $. We have the short exact sequence $\Ext^1(F, h(C)) \to \Ext^1(F, C) \to \Ext^1(F, C/h(C))$.
The first $\Ext$ is zero as we have just seen, and $C/h(C) \in Q^\bot$ because $Q^\bot$ is closed under homomorphic images. We want to show that $\Ext^1(F, C/h(C))=0$. Apply \cite[Theorem~3.5]{AS} to the injective ring epimorphism $R\to Q$. As we have already seen, the projective dimension of $Q$ is  $\le1$, so that condition (1) in \cite[Theorem~3.5]{AS} holds. Thus condition (4) holds, that is, the class $K^\bot $ is the class of modules generated by $Q$, that is, the class of $h$-divisible modules. Thus
the class $K^\bot $ is closed under extensions. Hence we can apply Proposition~\ref{factor}, and get that
$C/h(C)$ is $h$-reduced. So it suffices to assume that $C$ is $h$-reduced.
From the short exact sequence $0 \to R \to Q \to K \to 0$, we obtain the exact sequence
$0 \to F \to F\otimes Q \to F \otimes K \to 0$. Thus we have the exact sequence
$\Ext^1_R(F\otimes Q, C) \to \Ext^1_R(F, C) \to \Ext^2_R(F \otimes K, C)$.
The first $\Ext^1_R$ is zero: 
$\Ext^1_R (F\otimes Q, C) \cong \Ext^1_R (F, \Hom (Q, C))$ by \cite [Lemma 2.3] {FL}, and $\Hom (Q, C)=0$ because $C$ is $h$-reduced. Finally, let us prove that the third $\Ext^2_R$ is also zero. We have the short exact sequence of right $R$-modules $0\to F\otimes R\to F\otimes Q\to F\otimes K\to 0$. The right $R$-module $F\otimes Q$ is flat, and therefore $F\otimes K$ is of weak dimension $\le1$. By $(vi)$, 
$F \otimes K$ is of projective dimension $\le1$. Thus $\Ext^2_R(F \otimes K, C)=0$.

$(vii) {}\Rightarrow{} (viii)$ and $(viii) {}\Rightarrow{} (iii)$   are obvious. 
\end{proof}

\end{document}